 \newtheorem{thm}{Theorem}[section]
 \newtheorem{defn}[thm]{Definition}
 \newtheorem{prop}[thm]{Proposition} %%Delete [thm] to re-start numbering
 \newtheorem{lem}[thm]{Lemma} %%Delete [thm] to re-start numbering
 \newtheorem{cor}[thm]{Corollary} %%Delete [thm] to re-start numbering
 \numberwithin{equation}{section}
\newcommand{\loc}{\mathop{\rm loc}}
\newcommand{\ds}{\displaystyle}
\begin{document}

\title[Some Calculations of~Orlicz Cohomology]
{Some Calculations of~Orlicz Cohomology \\ and Poincar\'e--Sobolev--Orlicz Inequalities}
\thanks{The second author was supported by~the~Program of~Basic Scientific Research
of~the~Siberian Branch of~the~Russian Academy of~Sciences.}

\author{Vladimir Gol$'$dshtein}
\address{Department of Mathematics,
Ben Gurion University of the Negev,
P.O.Box 653, Beer Sheva, Israel} 
\email{vladimir@bgumail.bgu.ac.il}

\author{Yaroslav Kopylov}
\address{Sobolev Institute of Mathematics, Pr.~Akad. Koptyuga 4,
630090, Novosibirsk, Russia  \\
Novosibirsk State University, ul.~Pirogova~1,
630090, Novosibirsk, Russia}
\email{yakop@math.nsc.ru}

%\author{Yaroslav Kopylov}
%\address{Yaroslav Kopylov
%\newline\hphantom{iii} Sobolev Institute of Mathematics
%\newline\hphantom{iii} Pr.~Akad. Koptyuga 4
%\newline\hphantom{iii} 630090, Novosibirsk, Russia;
%\newline\hphantom{iii} Novosibirsk State University
%\newline\hphantom{iii} ul.~Pirogova~1
%\newline\hphantom{iii} 630090, Novosibirsk, Russia}
 
%\email{yakop@math.nsc.ru}

\begin{abstract}
We carry out calculations of Orlicz cohomology for some basic Riemannian
manifolds (the~real line, the~hyperbolic plane, the~ball). Relationship
between Orlicz cohomology and Poincar\'e--Sobolev--Orlicz-type inequalities 
is discussed.

\vspace{2mm}
\noindent
\textbf{Key words and phrases:} differential form, Orlicz cohomology,
torsion, Poincar\'e--Sobolev--Orlicz inequality

\vspace{2mm}
\noindent
\textbf{Mathematics Subject Classification 2000:}  58A12, 46E30, 22E25
\end{abstract}

\maketitle

%\subjclass[2000]{58J10, 58A12,46E35,35J15}
%\keywords{Sobolev Inequality, Differential Forms,$L_{q,p}$-cohomology.}

% \newcommand{\qed}{\hfill {\raisebox{-0pt}{\framebox[.23cm]{}}} \newline}

%\makeatother

%%%%%%%%%%%%%%%%%%%%%%%%%%%%%%  

%\maketitle

%\tableofcontents

\section*{Introduction}

The~article continues the~study of~Orlicz cohomology of~Riemannian manifolds
initiated in~\cite{KopPan2015,Kop2017}. 

Orlicz cohomology is a~natural generalization of~$L_{qp}$-cohomology (for a~detailed
discussion of~$L_{qp}$-cohomology, the~reader is referred, for~example, to~\cite{GT06}).

Like Orlicz function spaces,
the Orlicz spaces~$L^\Phi$ of differential forms are a~natural nonlinear generalization
of the spaces~$L^p$. Orlicz spaces of differential forms
on domains in~$\mathbb{R}^n$ were first considered
by Iwaniec and Martin in~\cite{IM2001} and then by~Agarwal, Ding, and Nolder in~\cite{ADN2009}.
%(see also \cite{DinXin2014,JDin2013}). 
Orlicz forms on~an~arbitrary Riemannian manifold 
were apparently first examined by~Kopylov and Panenko in~\cite{KopPan2015}.

In~\cite{GT06}, Gol$'$dshtein and Troyanov demonstrated close relationship 
between $L_{qp}$-cohomology and Sobolev-type inequalities on~Riemannian manifolds
and, basing on this and some ``almost duality'' techniques, performed calculations 
of~$L_{qp}$-co\-ho\-mo\-lo\-gy for some basic manifolds. 
It turns out that, with some significant
corrections and sometimes under additional constraints on~the~$N$-functions from which
the~Orlicz cohomology is constructed, these methods prove to~be fruitful in~computing
Orlicz cohomology. 

The structure of the article is as follows: In~Section~\ref{N-func}, we recall
the main notions and necessary properties of Orlicz function spaces.
 In~Section~\ref{bancomp}, we recall some basic
information on~abstract Banach complexes. Section~\ref{orl_com} contains 
defininitions concerning Orlicz spaces of differential forms on~a~Riemannian 
manifold, Orlicz cohomology, and its interpretation in~terms 
of~Poincar\'e--Sobolev--Orlicz inequalities (Theorems~\ref{th.sobinor1a}
and~\ref{th.sobinor2a}). Then we calculate the~$L_{\Phi_1,\Phi_2}$-cohomology 
of~$\mathbb{R}$ (Section~\ref{cohom_rline}) the~hyperbolic plane 
(Section~\ref{cohom_hyp}) and the~$L_\Phi$-cohomology of~the~ball (``$L^\Phi$-Poincar\'e
inequality'', Section~\ref{sec.Bnor}).

\section{$N$-Functions and Orlicz Function Spaces}\label{N-func}

\begin{defn}
A~nonnegative function $\Phi:\mathbb{R}\to \mathbb{R}$ is called an {\it $N$-function} if

{\rm(i)} $\Phi$ is even and convex;

{\rm(ii)} $\Phi(x)=0 \Longleftrightarrow x=0$;

{\rm(iii)} $\lim\limits_{x\to 0} \frac{\Phi(x)}{x}=0;
\quad \lim\limits_{x\to \infty} \frac{\Phi(x)}{x}=\infty$.
\end{defn}

An $N$-function $\Phi$ has left and right derivatives (which can differ only on an at most
countable set, see, for instance, \cite[Theorem~1, p.~7]{RaRe91}).
The left derivative $\varphi$ of $\Phi$ is left continuous,
nondecreasing on~$(0,\infty)$, and such that $0<\varphi(t)< \infty$ for $t>0$, $\varphi(0)=0$,
$\lim\limits_{t\to\infty}\varphi(t)=\infty$. The function
$$
\psi(s) = \inf \{t>0 \,:\, \varphi(t)>s \}, \quad s>0,
$$
is called the {\it left inverse} of~$\varphi$.

The functions $\Phi, \Psi$ given by
$$
\Phi(x) = \int\limits_0^{|x|} \varphi(t) dt, \quad \Psi(x) = \int\limits_0^{|x|}
\psi(t) dt
$$
are called {\it complementary $N$-functions}.

The $N$-function $\Psi$ complementary to an $N$-function $\Phi$ can also be expressed as
$$
\Psi(y)= \sup \{ x|y| - \Phi(x) \,:\, x\ge 0\}, \quad y\in\mathbb{R}.
$$

Throughout the~article, given an~$N$-function~$\Phi:\mathbb{R}\to [0,\infty)$, we denote 
by $\Phi^{-1}$ its ``positive''  inverse $\Phi^{-1}:[0,\infty)\to [0,\infty)$.

$N$-functions are classified in accordance with their growth rates as follows:

\begin{defn}
An $N$-function $\Phi$ is said to satisfy the $\Delta_2$-condition (for all~$x$), 
which is written as $\Phi\in\Delta_2$ if there exists a~constant $K>2$ such that 
$\Phi(2x)\le K \Phi(x)$ for all~$x\ge 0$; $\Phi$ is said to~satisfy 
the~$\nabla_2$-condition (for all~$x$), which is denoted symbolically as 
$\Phi\in\nabla_2$, if there is a~constant $c>1$ such that 
$\Phi(x)\le \frac{1}{2c}\Phi(cx)$ for all~$x\ge0$. 
\end{defn}

It is not hard to~see that
an~$N$-function~$\Phi$ satisfies the the~$\nabla_2$-condition if and only if 
its dual $N$-function satisfies the~$\Delta_2$-condition.

\medskip

Henceforth, let $\Phi$ be an $N$-function and let $(\Omega,\Sigma, \mu)$ be a measure
space.

\begin{defn}
Given a~measurable function $f:\Omega \to \mathbb{R}$, we put
$$
\rho_{\Phi}(f):= \int_\Omega \Phi(f) d\mu.
$$
\end{defn}

\begin{defn}
The linear space
$$
L^\Phi= L^\Phi(\Omega) = L^\Phi(\Omega,\Sigma,\mu) = \{ f:\Omega\to\mathbb{R} ~\text{measurable}~ :
\rho_\Phi(af)<\infty ~\text{for {\it some} $a>0$}\}
$$
is called an {\it Orlicz space} on~$(\Omega,\Sigma,\mu)$.
\end{defn}

Let $\Psi$ be the complementary $N$-function to~$\Phi$.

Below we as usual identify two functions equal outside a~set of measure zero.

If $f\in L^\Phi$ then the functional $\|\cdot\|_\Phi$ (called
{\it the Orlicz norm}) defined by
$$
\|f\|_\Phi= \|f\|_{L^\Phi(\Omega)} = \sup \biggl\{ \biggl| \int_\Omega fg \, d\mu \biggr| :
\rho_\Psi(g) \le 1 \biggr\}
$$
is a seminorm. It becomes a norm if $\mu$ satisfies the {\it finite subset property}
(see~\cite[p.~59]{RaRe91}): if $A\in \Sigma$ and $\mu(A)>0$ then there exists
$B\in\Sigma$, $B\subset A$, such that $0<\mu(B)<\infty$.

The equivalent {\it gauge} (or {\it Luxemburg}) {\it norm} of a function $f\in L^\Phi$
is defined by the formula
$$
\|f\|_{(\Phi)}= \|f\|_{L^{(\Phi)}(\Omega)} =
\inf \biggl\{ K>0 \, : \, \rho_\Phi\biggl(\frac{f}{K}\biggr)\le 1 \biggr\}.
$$
This is a norm without any constraint on the measure~$\mu$ (see \cite[p.~54, Theorem~3]{RaRe91}).

\section{Banach Complexes}\label{bancomp}

Like in the~case of~$L_{q,p}$-cohomology, treated in~\cite{GT06}, we apply
some abstract facts about Banach complexes to~the~Orlicz cohomology 
of~Rimennian manifolds.
 
In~this section, we recall some definitions and assertions 
about abstract Banach complexes given in~\cite{GT06}.

\begin{defn}\label{def.ban.com}
A \emph{Banach complex} is a sequence $F^*=\{ F^k,d_k\}_{k\in\mathbb{N}}$ 
where $F^{k}$ is a Banach space and $d=d^k:F^k\rightarrow F^{k+1}$ is a~bounded 
operator with $d^{k+1}\circ d^{k}=0$.
\end{defn}

\begin{defn}
Given a Banach complex $\{F^k,d\}$, introduce the~vector spaces:
\begin{itemize}
\item $Z^k:=\ker(d:F^{k}\rightarrow F^{k+1})$ (a~closed subspace of~$F^{k}$);
\item $B^k:=$Im$(d:F^{k-1}\rightarrow F^{k})\subset Z^{k}$;
\item $H^k(F^{*}):=Z^k/B^k$ is the \emph{cohomology} of the complex
$F^{*}=\{F^k,d\}$;
\item $\overline{H}^k(F^{*}):=Z^k/\overline{B}^k$ is \emph{the reduced
cohomology} of the complex $F^{*}$;
\item $T^{k}(F^{*}):=\overline{B}^{k}/B^{k}=H^{k}/\overline{H}^{k}$ is
the \emph{torsion} of the complex $F^{*}$.
\end{itemize}
\end{defn}

As was observed in~\cite{GT06}, the~following easy assertion holds:

\begin{enumerate}[(a)]
\item $\overline{H}^{k},Z^{k}$ and $\overline{B}^{k}$ are Banach spaces;
\item The natural (quotient) topology on $T^{k}:=\overline{B}^{k}/B^{k}$
is coarse (any closed set is either empty or $T^{k}$);
\item there is a~natural exact sequence
\begin{equation*}\label{exact}
  0\rightarrow T^k\rightarrow
H^k\rightarrow\overline{H}^k\rightarrow 0.
\end{equation*}
\end{enumerate}

\begin{lem}\cite[Lemma~4.4]{GT06}\label{lem.notorsion}
For any Banach complex $\{ F^{k},d\}$, the following
are equivalent:
\begin{enumerate}[(i)]
\item $T^k=0$;
\item $\dim T_k < \infty$;
\item $H^k$ is a Banach space;
\item $B^k\subset F^k$ is closed.
\end{enumerate}
\end{lem}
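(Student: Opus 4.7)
The plan is to prove the implications cyclically, with (i) $\Leftrightarrow$ (iv) obtained directly from the definition $T^k := \overline{B}^k/B^k$. First I would observe that $T^k = 0$ is just the statement $B^k = \overline{B}^k$, so (i) $\Leftrightarrow$ (iv), while (i) $\Rightarrow$ (ii) is trivial. For (iv) $\Rightarrow$ (iii), the space $Z^k$ is closed (being the kernel of the bounded operator $d^k$) and hence Banach; if $B^k$ is also closed, then $H^k = Z^k/B^k$ is a quotient of a Banach space by a closed subspace, hence again Banach. For (iii) $\Rightarrow$ (iv) I would argue by contraposition: if $B^k$ is not closed in $Z^k$, then the quotient seminorm on $Z^k/B^k$ fails to be a norm (the class of any $z\in\overline{B}^k\setminus B^k$ is nonzero but has seminorm zero), contradicting the assumption that $H^k$ is a Banach space.

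The main content lies in the implication (ii) $\Rightarrow$ (i), for which the idea is to realize $B^k$ as the image of an injective bounded operator between Banach spaces and invoke the open mapping theorem. Since $\ker(d^{k-1})$ is closed in $F^{k-1}$, the quotient $X := F^{k-1}/\ker(d^{k-1})$ is Banach, and $d^{k-1}$ factors through a bounded injection $\tilde{d}\colon X \to \overline{B}^k$ whose image is precisely $B^k$ (dense in $\overline{B}^k$). Assuming $\dim T^k = n < \infty$, I would choose $y_1, \ldots, y_n \in \overline{B}^k$ whose classes form a basis of $T^k$ and define
\[
S\colon X \oplus \mathbb{R}^n \longrightarrow \overline{B}^k, \qquad S(x,a_1,\dots,a_n) = \tilde{d}(x) + \sum_{i=1}^n a_i y_i.
\]
Then $S$ is a bounded bijection between Banach spaces (injectivity uses the linear independence of the $[y_i]$ in $T^k$ together with the injectivity of $\tilde{d}$), so by the open mapping theorem $S$ is a topological isomorphism. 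Consequently $B^k = S(X \oplus \{0\})$ is closed in $\overline{B}^k$, and together with the density of $B^k$ in $\overline{B}^k$ this forces $B^k = \overline{B}^k$, i.e.\ $T^k = 0$.

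The hard part is precisely (ii) $\Rightarrow$ (i): purely algebraically, a subspace with a finite-dimensional topological complement need not be closed (e.g.\ the kernel of a discontinuous linear functional is dense and of codimension one in an infinite-dimensional Banach space). The argument must therefore exploit the fact that $B^k$ is the image of a continuous linear map, which is exactly what makes the open mapping theorem applicable after passing to the quotient by the kernel.
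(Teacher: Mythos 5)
The paper cites this lemma from GT06 without reproducing the argument, so there is no in-text proof to compare against. Your proof is correct and follows the standard approach for this kind of statement: the equivalences (i) $\Leftrightarrow$ (iv) and (iv) $\Leftrightarrow$ (iii) fall out of the definitions, and the key implication (ii) $\Rightarrow$ (i) is handled exactly as in GT06 and the surrounding $L_{q,p}$- and $L^2$-cohomology literature, by factoring $d^{k-1}$ through $F^{k-1}/Z^{k-1}$, adjoining a finite-dimensional complement of $B^k$ in $\overline{B}^k$, and invoking the open mapping theorem to conclude that the dense subspace $B^k$ is in fact closed.
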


\begin{lem}\cite[Proposition~4.5]{GT06}\label{abs.sob1}
The following are equivalent:
\begin{enumerate}[(i)]
  \item $H^{k}=0$;
  \item The operator $d_{k-1}:F^{k-1}/Z^{k-1}\rightarrow Z^{k}$ admits a
bounded inverse $d_{k-1}^{-1}$;
  \item There exists a constant $C_{k}$ such that for any \ $\theta\in
Z^{k}$ there is an element $\eta\in F^{k-1}$ with $d\eta=\theta$
and
$$
\|\eta\|_{F^{k-1}}\leq C_k\|\theta\|_{F^k}.
$$
\end{enumerate}
\end{lem}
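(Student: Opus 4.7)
The plan is to prove the equivalences in the order (i)$\Leftrightarrow$(ii)$\Leftrightarrow$(iii), relying essentially on the open mapping theorem for the nontrivial direction, and on the definition of the quotient norm for the equivalence between the bounded inverse formulation and the explicit primitive estimate.

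First I would observe that the induced map $\bar d_{k-1}\colon F^{k-1}/Z^{k-1}\to Z^k$ is well defined, linear, continuous, and injective (by construction, since the kernel of $d_{k-1}$ is exactly $Z^{k-1}$). Moreover $Z^{k-1}$ is closed in $F^{k-1}$ and $Z^k$ is closed in $F^k$ as kernels of bounded operators, so $F^{k-1}/Z^{k-1}$ and $Z^k$ are Banach spaces. The image of $\bar d_{k-1}$ is precisely $B^k$. Therefore $H^k=Z^k/B^k=0$ is the same as $\bar d_{k-1}$ being surjective, and a continuous linear bijection between Banach spaces automatically has a bounded inverse by the open mapping theorem. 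This gives (i)$\Leftrightarrow$(ii); the direction (ii)$\Rightarrow$(i) is immediate since existence of a bounded inverse forces surjectivity.

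For (ii)$\Rightarrow$(iii), given $\theta\in Z^k$ set $[\eta_0]:=d_{k-1}^{-1}\theta\in F^{k-1}/Z^{k-1}$, so that $\|[\eta_0]\|_{F^{k-1}/Z^{k-1}}\le \|d_{k-1}^{-1}\|\cdot\|\theta\|_{F^k}$. By the definition of the quotient norm as an infimum, for any $\varepsilon>0$ one can choose a representative $\eta\in F^{k-1}$ with $d\eta=\theta$ and $\|\eta\|_{F^{k-1}}\le(\|d_{k-1}^{-1}\|+\varepsilon)\|\theta\|_{F^k}$; taking for instance $\varepsilon=1$ gives the required inequality with $C_k:=\|d_{k-1}^{-1}\|+1$.

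Conversely, for (iii)$\Rightarrow$(ii), any $\theta\in Z^k$ admits $\eta\in F^{k-1}$ with $d\eta=\theta$, so $\bar d_{k-1}$ is surjective and, being already injective, is a bijection. The estimate $\|\eta\|_{F^{k-1}}\le C_k\|\theta\|_{F^k}$ together with the infimum definition of the quotient norm gives $\|\bar d_{k-1}^{-1}\theta\|_{F^{k-1}/Z^{k-1}}\le C_k\|\theta\|_{F^k}$, so $\bar d_{k-1}^{-1}$ is bounded. No single step is really hard; the only point requiring a touch of care is the passage from the quotient-norm infimum to a genuine representative $\eta$ in the direction (ii)$\Rightarrow$(iii), which is handled by the standard $\varepsilon$-trick above.
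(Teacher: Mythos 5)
Your proof is correct and is essentially the standard argument; the paper itself gives no proof of this lemma but cites it as Proposition~4.5 of~\cite{GT06}, where the argument is the same: identify $H^k=0$ with surjectivity of the induced injective map $\bar d_{k-1}\colon F^{k-1}/Z^{k-1}\to Z^k$ between Banach spaces, invoke the open mapping theorem for the bounded inverse, and translate boundedness of $d_{k-1}^{-1}$ into the explicit constant via the infimum definition of the quotient norm. One small point worth making explicit in your (ii)$\Rightarrow$(iii) step: the quotient-norm infimum gives, for any $\delta>0$, a representative $\eta$ with $\|\eta\|\le\|[\eta_0]\|+\delta$, so to obtain the multiplicative bound you should choose $\delta=\varepsilon\|\theta\|_{F^k}$ (and handle $\theta=0$ separately by taking $\eta=0$); as written, the passage from an additive $\varepsilon$ to the factor $\|d_{k-1}^{-1}\|+\varepsilon$ is slightly elided, though easily repaired.
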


\begin{lem}\cite[Propositions~4.6 and 4.7]{GT06}\label{abs.sob2} 
The following conditions~(i) and~(ii) are equivalent:
\begin{enumerate}[(i)]
  \item $T^{k}=0$.
  \item The operator $d_{k-1}:F^{k-1}/Z^{k-1}\rightarrow B^{k}$ admits a
bounded inverse $d_{k-1}^{-1}$.
\end{enumerate}
Any of these conditions implies 
\begin{enumerate}[(i)] \setcounter{enumi}{2}
  \item There exists a constant $C_{k}^{'}$ such that for any $\xi\in F^{k-1}$
 there is an element $\zeta\in Z^{k-1}$ such that
\begin{equation}
\Vert\xi-\zeta\Vert_{F^{k-1}}\leq C_{k}^{'}\Vert
d\xi\Vert_{F^{k}}.\label{eq2}
\end{equation}
\smallskip
\end{enumerate}
Moreover, if $F^{k-1}$ is a reflexive Banach space then
conditions~(i)-(iii) are equivalent.
\end{lem}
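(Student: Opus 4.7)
The plan is to establish the cycle (i)$\Leftrightarrow$(ii)$\Rightarrow$(iii) in general, and then to close the loop with (iii)$\Rightarrow$(i) under the reflexivity assumption.

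For the equivalence (i)$\Leftrightarrow$(ii), I would observe that $d_{k-1}$ vanishes on $Z^{k-1}$ and so descends to an injective bounded linear map $\bar d\colon F^{k-1}/Z^{k-1}\to F^k$ whose image is exactly $B^k$. If $T^k=0$, then by Lemma~\ref{lem.notorsion} the subspace $B^k$ is closed in $F^k$, hence itself a Banach space, and the open mapping theorem applied to $\bar d\colon F^{k-1}/Z^{k-1}\to B^k$ yields the required bounded inverse. Conversely, if such a bounded inverse exists, then $\bar d$ is a topological isomorphism onto $B^k$, so $B^k$ inherits completeness from $F^{k-1}/Z^{k-1}$ and is therefore closed in $F^k$; another application of Lemma~\ref{lem.notorsion} gives $T^k=0$.

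For (ii)$\Rightarrow$(iii), I take any $\xi\in F^{k-1}$. Since $d\xi\in B^k$, the class $d_{k-1}^{-1}(d\xi)=[\xi]\in F^{k-1}/Z^{k-1}$ satisfies $\|[\xi]\|\leq\|d_{k-1}^{-1}\|\cdot\|d\xi\|_{F^k}$. Using the definition of the quotient norm as an infimum over $Z^{k-1}$, I pick a representative $\zeta\in Z^{k-1}$ with $\|\xi-\zeta\|_{F^{k-1}}\leq 2\|[\xi]\|$, which yields (iii) with $C_k'=2\|d_{k-1}^{-1}\|$.

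For (iii)$\Rightarrow$(i) under reflexivity of $F^{k-1}$, the plan is to show directly that $B^k$ is closed. Suppose $d\xi_n\to\omega$ in $F^k$. By (iii), choose $\zeta_n\in Z^{k-1}$ with $\|\xi_n-\zeta_n\|_{F^{k-1}}\leq C_k'\|d\xi_n\|_{F^k}$ and set $\eta_n:=\xi_n-\zeta_n$; then $\{\eta_n\}$ is bounded in $F^{k-1}$ and $d\eta_n=d\xi_n\to\omega$. By reflexivity, a subsequence $\eta_{n_j}$ converges weakly to some $\eta\in F^{k-1}$; since $d$ is bounded and hence weakly continuous, $d\eta_{n_j}\to d\eta$ weakly. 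Combined with the strong convergence $d\eta_{n_j}\to\omega$, this forces $d\eta=\omega$, so $\omega\in B^k$, and Lemma~\ref{lem.notorsion} delivers $T^k=0$.

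The main obstacle is precisely this last implication: without reflexivity a bounded sequence in $F^{k-1}$ need not admit a weakly convergent subsequence, so the Poincar\'e-type estimate (iii) alone cannot be expected to produce a preimage of $\omega$ and hence cannot force closedness of $B^k$. Reflexivity is exactly what bridges the abstract inequality back to the cohomological statement.
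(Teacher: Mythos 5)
Your proposal is correct and follows essentially the route of Gol$'$dshtein and Troyanov in~\cite[Propositions~4.6 and~4.7]{GT06}, which the paper cites without reproducing the argument: the open mapping theorem together with Lemma~\ref{lem.notorsion} handles (i)$\Leftrightarrow$(ii), the quotient norm gives (ii)$\Rightarrow$(iii), and weak sequential compactness from reflexivity closes the loop (iii)$\Rightarrow$(i). All steps check out, including the observations that $\{d\xi_n\}$ being convergent forces $\{\eta_n\}$ to be bounded and that boundedness of $d$ yields weak-to-weak continuity.
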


\section{Orlicz Spaces of Differential Forms and Orlicz Cohomology}\label{orl_com}

Let $X$ be a~Riemannian manifold of dimension~$n$. Given $x\in X$, denote by $(\omega(x),\theta(x))$
the scalar product of exterior $k$-forms $\omega(x)$ and $\theta(x)$ on~$T_x X$. This
gives a~function $x\mapsto (\omega(x),\theta(x))$ on~$X$.

Let $\Phi:\mathbb{R}\to \mathbb{R}$ and $\Psi:\mathbb{R}\to \mathbb{R}$
be two complementary $N$-functions.  Given a~measurable $k$-form $\omega$, we put
$$
\rho_\Phi(\omega):= \int_X \Phi(|\omega(x)|) d\mu_X.
$$
Here $d\mu_X$ stands for the volume element of the Riemannian manifold~$X$.
We will identify $k$-forms differing on a set of measure zero.

Given a (not necessarily orientable) Riemannian manifold~$X$, introduce
the space $L^\Phi(X,\Lambda^k)$ as the class of all measurable $k$-forms $\omega$
satisfying the condition
$$
\rho_\Phi(\alpha\omega)<\infty ~\text{for some}~ \alpha>0.
$$

As in the case of Orlicz function spaces, the space $L^\Phi(X,\Lambda^k)$ is endowed with
two equivalent norms: the {\it gauge norm}
$$
\|\omega\|_{(\Phi)}= 
\inf \biggl\{ K>0 \, : \, \rho_\Phi\biggl(\frac{\omega}{K}\biggr)\le 1 \biggr\}.
$$
and the {\it Orlicz norm}  ($\Psi$ is the complementary $N$-function to~$\Phi$):
$$
\|\omega\|_{\Phi} 
= \sup \biggl\{ \biggl| \int_X (\omega(x),\theta(x))\, d\mu_X \biggr|
: \rho_\Psi(\theta) \le 1 \biggr\}
$$
As in the case of function spaces, it can be proved that $L^\Phi(X,\Lambda^k)$ endowed
with one of these norms is a~Banach space.

Obviously, the gauge norm of a~$k$-form $\omega$ is nothing but the gauge norm of its modulus
function~$|\omega|$. The same holds for the~Orlicz norm (\cite[Lemma~2.1]{KopPan2015}). 

Unless otherwise specified, we endow the $L^\Phi$ spaces with the~gauge norms;
the quotient (semi)norm on each of the cohomology spaces to~be~defined below 
depends on~the~choice of~the~norms on~$L^{\Phi_I}$ and~$L^{\Phi_{II}}$ 
but the resulting topology does not.

\begin{defn}
A form $\theta\in L_{1,\loc}^{j+1}(X)$ is called the ({\it weak}) {\it differential}
$d\omega$ of $\omega\in L_{1,\loc}^j(X)$
if
$$
\int\limits_U \omega\land du
=(-1)^{j+1}\int\limits_U \theta \land u
$$
for every orientable domain $U\subset\mathop{Int} X$ and every form
$u\in D^{n-j-1}(X)$ having support in~$U$.
\end{defn}

Let $\Phi_I$ and $\Phi_{II}$ be $N$-functions. For $0\le k\le n$, put
$$
\Omega_{\Phi_I,\Phi_{II}}^k(X) =\left\{ \omega\in L^{\Phi_I}(X,\Lambda^k)
\, : \, d\omega\in L^{\Phi_{II}}(X,\Lambda^{k+1}) \right\}.
$$
This is a~Banach space with the norm
$$
\|\omega\|_{(\Phi_I),(\Phi_{II})} = \|\omega\|_{(\Phi_I)} + \|d\omega\|_{(\Phi_{II})}.
$$                                                                

Consider also the spaces
\begin{gather*}
Z_{\Phi_{II}}^k(X) = \{ \omega \in L^{\Phi_{II}}(X,\Lambda^k)
  \, : \, d\omega=0 \};
\\
B_{\Phi_I,\Phi_{II}}^k(X)
= \{ \omega \in L^{\Phi_{II}}(X,\Lambda^k) \, : \, \omega = d\beta ~\text{for some}~
\beta\in L^{\Phi_I}(X,\Lambda^{k-1}) \}.
\end{gather*}

Denote by $\overline{B}_{\Phi_I,\Phi_{II}}^k(X)$
the closure of $B_{\Phi_I,\Phi_{II}}^k(X)$
in~$L^{ \Phi_{II} }(X,\Lambda^k)$. 

\begin{defn}
The quotient spaces
$$
H_{ \Phi_I , \Phi_{II}}^k(X)
:=Z_{ \Phi_{II} }^k(X)/B_{ \Phi_I , \Phi_{II}}^k(X)
$$
and
$$
\overline{H}_{ \Phi_I , \Phi_{II}}^k(X)
:=Z_{ \Phi_{II}}^k(X)/\overline{B}_{\Phi_I,\Phi_{II}}^k(X)
$$
are called the $k$th {\it $L_{\Phi_I,\Phi_{II}}$-cohomology} and the $k$th
{\it reduced $L_{\Phi_I,\Phi_{II}}$-cohomology}
of the~Riemannian manifold~$X$, the latter cohomology
being a~Banach space.
Define the {\em $L_{\Phi_I,\Phi_{II}}$-torsion} as
$$T_{\Phi_I,\Phi_{II}}^{k}(X)
:=\overline{B}_{\Phi_I,\Phi_{II}}^{k}(X)\,/ B_{\Phi_I,\Phi_{II}}^{k}(X).
$$
\end{defn}

The torsion $T_{\Phi_I,\Phi_{II}}^{k}(X)$ can be either $\{0\}$ or
infinite-dimensional. In~fact, if $\dim T_{\Phi_I,\Phi_{II}}^{k}(X)<\infty$ then
$B_{\Phi_I,\Phi_{II}}^{k}(X)$ is closed, hence $T_{\Phi_I,\Phi_{II}}^{k}(X)=\{0\}$. In
particular, if $\dim T_{\Phi_I,\Phi_{II}}^{k}(X)\neq 0$ then 
$\dim H_{\Phi_I,\Phi_{II}}^{k}(X)=\infty$.

If $\Phi_I=\Phi_{II}=\Phi$ then we use the notations $\Omega_{\Phi}^k(X)$,
$H_{\Phi}^k(X)$, and $\overline{H}_{\Phi}^k(X)$ instead
of~$\Omega_{\Phi,\Phi}^k(X)$,
$H_{\Phi,\Phi}^k(X)$, and
$\overline{H}_{\Phi,\Phi}^k(X)$
respectively. Thus, the {\it $L_{\Phi}$-cohomology} $H_{\Phi}^k(X)$
(respectively, the {\it reduced $L_{\Phi}$-cohomology}
$\overline{H}_{\Phi}^k(X)$) is the~$k$th cohomology (respectively, the~$k$th 
reduced cohomology) of the cochain complex $\{\Omega_{\Phi}^*(X),d\}$.

In~\cite{GT06}, Gol$'$dshtein and Troyanov realized the $k$th
$L_{q,p}$-cohomology as the $k$th cohomology of some Banach complex. Here we apply this approach
to~$L_{\Phi_I ,\Phi_{II} }$-cohomology.

Fix an~$(n+1)$-tuple of $N$-functions $\mathcal{F}=\{\Phi_0,\Phi_1, \dots, \Phi_n\}$
and put
$$
\Omega_{\mathcal{F}}^k (X) = \Omega^k_{\Phi_k, \Phi_{k+1}}(X); 
$$
Since the weak exterior differential is a~bounded operator
$d:\Omega_{\mathcal{F}}^k(X)\to \Omega_{\mathcal{F}}^{k+1}(X)$,
we obtain a~Banach complex
$$
0\to \Omega_{\mathcal{F}}^0(X)
\to \Omega_{\mathcal{F}}^1(X) \
\to \dots \to \Omega_{\mathcal{F}}^k(X) \to \dots
\to \Omega_{\mathcal{F}}^n(X) \to 0.
$$
The {\it $L_{\mathcal{F}}$-cohomology} $H^k_{\mathcal{F}}(X)$ 
(respectively, the {\it reduced $L_{\mathcal{F}}$-cohomology} 
${\overline H}^k_{\mathcal{F}}(X)$) 
of~$X$ is the $k$th cohomology (respectively, the $k$th reduced cohomology) 
of the~Banach complex $(\Omega_{\mathcal{F}}^*,d)$. 
%We have topological isomorphisms:
%$$
%H^k_{\mathcal{F}}(X)\cong H^k_{(\mathcal{F})}(X); \quad
%{\overline H}^k_{\mathcal{F}}(X)\cong \overline{H}^k_{(\mathcal{F})}(X).
%$$

The above-defined cohomology spaces $H^k_{\mathcal{F}}(X)$ and
$\overline{H}^k_{\mathcal{F}}(X)$ in fact depend only on~$\Phi_{k-1}$ and~$\Phi_k$:
\begin{gather*}
H^k_{\mathcal{F}}(X) = H^k_{ \Phi_{k-1}, \Phi_k }(X)
= Z^k_{\Phi_k} (X) \left/ B^k_{\Phi_{k-1},\Phi_k} \right.; \\
\overline{H}^k_{\mathcal{F}}(X)
= \overline{H}^k_{ \Phi_{k-1}, \Phi_k }(X)
= Z^k_{\Phi_k} (X) \left/
\overline{B}^k_{ \Phi_{k-1}, \Phi_k} \right..
\end{gather*}

\medskip
The~results on~abstract Banach complexes by~Gol$'$dhstein and Troyanov enable us 
to~interprete Orlicz cohomology in~terms of~a~Poincar\'e--Sobolev--Orlicz type inequality for
differential forms on~a~Riemannian manifold $X$:

\begin{thm}\label{th.sobinor1a}
$H_{\Phi_I,\Phi_{II}}^{k}(X)=0$
if and only if there exists a constant $C<\infty$ such that for
any closed differential form~$\omega \in L^{\Phi_{II}}(X,\Lambda^k)$
there exists a~differential form $\theta\in L^{\Phi_I}(X,\Lambda^{k-1})$ such
that $d\theta=\omega$ and
\[
\| \theta \|_{L^{(\Phi_I)}}\le C \|\omega\|_{L^{(\Phi_{II})}}.
\]
\end{thm}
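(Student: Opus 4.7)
The plan is to apply Lemma~\ref{abs.sob1} to a suitably chosen Banach complex built from $\Phi_I$ and $\Phi_{II}$. Specifically, I would pick an $(n+1)$-tuple $\mathcal{F}=\{\Phi_0,\dots,\Phi_n\}$ of $N$-functions with $\Phi_{k-1}=\Phi_I$ and $\Phi_k=\Phi_{II}$; the other entries are irrelevant, since the dependence observation stated just before the theorem shows that $H^k_{\mathcal F}(X)$ depends only on $\Phi_{k-1}$ and $\Phi_k$. The $k$th cohomology of the associated Banach complex $(\Omega^*_{\mathcal F}(X),d)$ is then exactly $H^k_{\Phi_I,\Phi_{II}}(X)$, so the hypothesis $H^k_{\Phi_I,\Phi_{II}}(X)=0$ is precisely condition~(i) of Lemma~\ref{abs.sob1}.

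Next, I would unwind condition~(iii) of Lemma~\ref{abs.sob1} in this concrete setting. The space $F^{k-1}=\Omega^{k-1}_{\Phi_I,\Phi_{II}}(X)$ carries the graph norm $\|\eta\|_{(\Phi_I)}+\|d\eta\|_{(\Phi_{II})}$, while a closed form $\omega\in Z^k_{\Phi_{II}}(X)$ has graph norm $\|\omega\|_{(\Phi_{II})}$ (its differential contribution vanishes). Condition~(iii) therefore produces, for every closed $\omega\in L^{\Phi_{II}}(X,\Lambda^k)$, a primitive $\theta\in L^{\Phi_I}(X,\Lambda^{k-1})$ with $d\theta=\omega$ and
\[
\|\theta\|_{(\Phi_I)}+\|\omega\|_{(\Phi_{II})}\le C_k\|\omega\|_{(\Phi_{II})},
\]
which yields the Poincar\'e--Sobolev--Orlicz inequality of the theorem with constant $C=C_k-1$. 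Conversely, if the desired inequality $\|\theta\|_{(\Phi_I)}\le C\|\omega\|_{(\Phi_{II})}$ holds for every closed $\omega$, adding $\|d\theta\|_{(\Phi_{II})}=\|\omega\|_{(\Phi_{II})}$ to both sides recovers condition~(iii) of Lemma~\ref{abs.sob1} with constant $C+1$, hence $H^k_{\Phi_I,\Phi_{II}}(X)=0$.

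Essentially all of the work is thereby outsourced to the abstract Banach-complex machinery of Section~\ref{bancomp}. The only genuine bookkeeping lies in translating between the graph norm on $\Omega^{k-1}_{\Phi_I,\Phi_{II}}$ used in Lemma~\ref{abs.sob1} and the pair of norms appearing in the statement of the theorem; this translation introduces only an additive $\|\omega\|_{(\Phi_{II})}$ term, which is absorbed harmlessly into the constant. I do not anticipate any serious technical obstacle: in particular, no growth hypothesis such as $\Delta_2$ or $\nabla_2$ on $\Phi_I$, $\Phi_{II}$ is needed, since Lemma~\ref{abs.sob1} is purely functional-analytic and $\Omega^{k-1}_{\Phi_I,\Phi_{II}}(X)$ is already a Banach space under the gauge-norm convention adopted in Section~\ref{orl_com}.
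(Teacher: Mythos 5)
Your proposal is correct and takes exactly the route the paper intends: the paper dismisses Theorem~\ref{th.sobinor1a} as an ``immediate consequence of Lemma~\ref{abs.sob1},'' and your unwinding — embedding $\Phi_I,\Phi_{II}$ into an $(n+1)$-tuple $\mathcal F$, invoking the observation that $H^k_{\mathcal F}$ depends only on $\Phi_{k-1},\Phi_k$, and then translating the graph norm on $\Omega^{k-1}_{\Phi_I,\Phi_{II}}$ into the split inequality at the cost of the harmless additive $\|\omega\|_{(\Phi_{II})}$ term — is precisely the bookkeeping that justifies this. The one micro-detail worth a sentence, which you implicitly handle, is that in the converse direction the given primitive $\theta\in L^{\Phi_I}$ automatically lies in $\Omega^{k-1}_{\Phi_I,\Phi_{II}}(X)$ because $d\theta=\omega\in L^{\Phi_{II}}$ by hypothesis.
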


This result is an~immediate consequence of Lemma~\ref{abs.sob1}.

\begin{thm}\label{th.sobinor2a}
(A) If $T_{\Phi_{II}}^{k}(X)=0$ then there exists a constant $C'$ such that 
for any differential form $\theta \in \Omega^{k-1}_{\Phi_I,\Phi_{II}}(X)$
there exists a~closed form $\zeta\in Z_{\Phi_{I}}^{k-1}(X)$ such that
\begin{equation}\label{inconc.sobor2}
\|\theta-\zeta\|_{L^{(\Phi_I)}} \le C' \|d\theta\|_{L^{(\Phi_{II})}}.
\end{equation}

(B) Conversely, if $\Phi_{II}\in \Delta_2\cap \nabla_2$ and there exists a~constant $C'$ 
such that for any form 
$\theta \in \Omega^{k-1}_{\Phi_I,\Phi_{II}}(X)$ there exists 
$\zeta\in Z_{\Phi_I}^{k-1}(X)$ such that~(\ref{inconc.sobor2}) holds
then $T_{\Phi_I,\Phi_{II}}(X)=0$.
\end{thm}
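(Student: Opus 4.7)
The plan is to derive both parts by applying Lemma~\ref{abs.sob2} to the Banach complex $\{\Omega_{\mathcal{F}}^*(X),d\}$ built from an $(n+1)$-tuple $\mathcal{F}$ with $\Phi_{k-1}=\Phi_I$ and $\Phi_k=\Phi_{II}$. The real work is translating between the graph norms on $F^{k-1}=\Omega^{k-1}_{\Phi_I,\Phi_{II}}$ and $F^k=\Omega^k_{\Phi_{II},\Phi_{k+1}}$ used in that lemma and the single-component norms appearing in~(\ref{inconc.sobor2}). Two elementary observations trivialize this translation: for any closed $\zeta\in Z^{k-1}_{\Phi_I}(X)$ one has $\|\theta-\zeta\|_{F^{k-1}}=\|\theta-\zeta\|_{(\Phi_I)}+\|d\theta\|_{(\Phi_{II})}$, because $d\zeta=0$; and because $d\circ d=0$, the graph norm on $F^k$ collapses at $d\theta$ to $\|d\theta\|_{F^k}=\|d\theta\|_{(\Phi_{II})}$.

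For part~(A), $T^k_{\Phi_I,\Phi_{II}}(X)=0$ is condition~(i) of Lemma~\ref{abs.sob2}, which implies (iii) unconditionally. With the substitutions above, (iii) reads $\|\theta-\zeta\|_{(\Phi_I)}+\|d\theta\|_{(\Phi_{II})}\le C\|d\theta\|_{(\Phi_{II})}$ for some constant $C$, and subtracting $\|d\theta\|_{(\Phi_{II})}$ from both sides yields~(\ref{inconc.sobor2}) with $C'=C-1$.

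For part~(B), the same translation run in reverse turns the hypothesized inequality into condition~(iii) of Lemma~\ref{abs.sob2}, and the task becomes to deduce condition~(i). I would argue this directly rather than appeal to the converse clause of Lemma~\ref{abs.sob2}. Since $Z^{k-1}_{\Phi_I}(X)$ is closed in $\Omega^{k-1}_{\Phi_I,\Phi_{II}}(X)$ (as the kernel of the bounded operator $d$ from the graph norm into $L^{\Phi_{II}}$), the induced map $\bar d\colon \Omega^{k-1}_{\Phi_I,\Phi_{II}}(X)/Z^{k-1}_{\Phi_I}(X)\to L^{\Phi_{II}}(X,\Lambda^k)$, $[\theta]\mapsto d\theta$, is a well-defined continuous injection with image $B^k_{\Phi_I,\Phi_{II}}(X)$. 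The hypothesized inequality forces the quotient norm to satisfy $\|[\theta]\|\le(C'+1)\|d\theta\|_{(\Phi_{II})}$, so $\bar d$ has a bounded inverse on its image; consequently that image is complete in the subspace topology of $L^{\Phi_{II}}$, therefore closed, giving $\overline{B}^k_{\Phi_I,\Phi_{II}}(X)=B^k_{\Phi_I,\Phi_{II}}(X)$ and $T^k_{\Phi_I,\Phi_{II}}(X)=0$. The main obstacle in either route is the reflexivity of $F^{k-1}$ that the converse clause of Lemma~\ref{abs.sob2} requires, and it is this reflexivity that the $\Delta_2\cap\nabla_2$ hypothesis on $\Phi_{II}$ (together with the analogous property implicitly used for $\Phi_I$) is meant to secure in the Orlicz setting.
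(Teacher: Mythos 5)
Your argument is correct, and it matches the paper on part (A) but departs from it on part (B) in a way worth pointing out.

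For part (A) you proceed exactly as the paper does: set up $\mathcal{F}$ so that $\Phi_{k-1}=\Phi_I$, $\Phi_k=\Phi_{II}$, observe that the graph norm on $F^{k-1}=\Omega^{k-1}_{\Phi_I,\Phi_{II}}$ at $\theta-\zeta$ collapses to $\|\theta-\zeta\|_{(\Phi_I)}+\|d\theta\|_{(\Phi_{II})}$ when $d\zeta=0$ and the graph norm on $F^k$ at $d\theta$ collapses to $\|d\theta\|_{(\Phi_{II})}$ because $d\circ d=0$, and then read off the inequality from Lemma~\ref{abs.sob2}(i)$\Rightarrow$(iii). That is the intended argument.

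For part (B) you do something genuinely different and, in fact, cleaner. The paper invokes the reflexivity clause of Lemma~\ref{abs.sob2}, and that clause requires $F^{k-1}=\Omega^{k-1}_{\Phi_I,\Phi_{II}}(X)$ to be reflexive, which needs \emph{both} $\Phi_I$ and $\Phi_{II}$ in $\Delta_2\cap\nabla_2$ (the space is a closed subspace of $L^{\Phi_I}\times L^{\Phi_{II}}$ via $\omega\mapsto(\omega,d\omega)$). The theorem's stated hypothesis, only $\Phi_{II}\in\Delta_2\cap\nabla_2$, would not secure this; the paper's proof, which also has some evident typos in the choice of $\mathcal{F}$ and in the space named $\Omega^{k-1}_{\Phi_{II},\Phi_{II}}$, glosses over this point. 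Your direct argument avoids the issue entirely: the hypothesized inequality bounds the quotient norm, so $\bar d\colon\Omega^{k-1}_{\Phi_I,\Phi_{II}}/Z^{k-1}_{\Phi_I}\to B^k_{\Phi_I,\Phi_{II}}$ is a linear homeomorphism; since the source is complete (the kernel is closed), $B^k_{\Phi_I,\Phi_{II}}$ is complete in the $L^{\Phi_{II}}$-norm and hence closed, giving $T^k=0$. This is a correct proof of $\text{(iii)}\Rightarrow\text{(ii)}\Rightarrow\text{(i)}$ at the level of abstract Banach complexes, with no reflexivity whatsoever, so your route actually proves the stronger statement that the $\Delta_2\cap\nabla_2$ assumption can be dropped from part~(B).

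One small complaint about your write-up: your closing sentence, ``The main obstacle in either route is the reflexivity of $F^{k-1}$\ldots,'' undercuts the argument you just gave. Your direct route has no reflexivity obstacle; that is its whole virtue. If you want to note that the $\Delta_2\cap\nabla_2$ hypothesis was presumably inserted to make the cited lemma's reflexivity clause apply, say that explicitly as a comment on the paper's proof rather than as a caveat on your own.
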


\smallskip

\begin{proof}
Considering the~Banach complex~$\Omega^*_{\mathcal{F}}$ 
with $\mathcal{F}=\{\Phi_{II},\dots,\Phi_{II},\Phi_{I},\dots,\Phi_{I}\}$, where
$\Phi_{II}$ changes to~$\Phi_{I}$ at~the~$k$th position, we get 
$$
H^k_{\mathcal{F}}(X) = H^k_{ \Phi_{I}, \Phi_{II} }(X); \quad
\overline{H}^k_{\mathcal{F}}(X)
= \overline{H}^k_{ \Phi_{I}, \Phi_{II} }(X).
$$
Since $\Phi_I\in \Delta_2\cap \nabla_2$, the~Banach space 
$\Omega^{k-1}_{\Phi_{II},\Phi_{II}}(X)$
is reflexive. Theorem~\ref{th.sobinor2a} now stems from Lemma~\ref{abs.sob2}.
\end{proof}

\section{The $L_{\Phi_1,\Phi_2}$-Cohomology of~$\mathbb{R}$}\label{cohom_rline}

Let $\Phi_1$ and $\Phi_2$ be $N$-functions.

\begin{prop} \label{prop.torsion_R}
 $T_{\Phi_1,\Phi_2}^{1}(\mathbb{R})\neq 0$.
\end{prop}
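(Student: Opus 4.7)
The plan is to exhibit a specific nonzero smooth $1$-form on~$\mathbb{R}$ that lies in the closure of~$B^1_{\Phi_1,\Phi_2}(\mathbb{R})$ but not in~$B^1_{\Phi_1,\Phi_2}(\mathbb{R})$ itself, which by definition forces $T^1_{\Phi_1,\Phi_2}(\mathbb{R})\ne 0$. A~natural candidate is $\omega=\phi\,dx$, where $\phi\in C_c^\infty(\mathbb{R})$ is nonnegative with $\operatorname{supp}\phi\subset[-A,A]$ and $\int_{\mathbb{R}}\phi\,dx=1$; such $\phi$ obviously lies in~$L^{\Phi_2}(\mathbb{R})$.

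First I would check that $\phi\,dx\notin B^1_{\Phi_1,\Phi_2}(\mathbb{R})$. If $\phi=g'$ weakly for some $g\in L^{\Phi_1}(\mathbb{R})$, then $g$ admits an~absolutely continuous representative, which must be constant on~$(-\infty,-A]$ (say equal to~$c_-$) and on~$[A,\infty)$ (say equal to~$c_+$), with $c_+-c_-=\int\phi=1$. But a~nonzero constant lies in~$L^{\Phi_1}$ on~no half-line, since $\int_A^{\infty}\Phi_1(\alpha c)\,dx=\infty$ for every $\alpha>0$ when $c\ne 0$; hence $c_-=c_+=0$, contradicting the jump condition.

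Next I would construct approximants by redistributing mass out to infinity. Fixing a~second bump $\eta\in C_c^\infty(\mathbb{R})$ with $\operatorname{supp}\eta\subset[0,1]$ and $\int\eta=1$, I~set $\eta_n(x):=\tfrac{1}{n}\eta(x/n-1)$ (supported in~$[n,2n]$, of total mass~$1$) and $f_n:=\phi-\eta_n$. Then $\int_{\mathbb{R}}f_n=0$, so the primitive $g_n(x):=\int_{-\infty}^{x}f_n(t)\,dt$ has compact support, is therefore in~$L^{\Phi_1}(\mathbb{R})$, and $f_n\,dx=dg_n\in B^1_{\Phi_1,\Phi_2}(\mathbb{R})$.

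The main obstacle will be verifying that $\|\phi-f_n\|_{(\Phi_2)}=\|\eta_n\|_{(\Phi_2)}\to 0$ as $n\to\infty$. After the substitution $u=x/n-1$,
\[
\rho_{\Phi_2}\!\left(\frac{\eta_n}{\epsilon}\right)=n\int_{0}^{1}\Phi_2\!\left(\frac{\eta(u)}{\epsilon n}\right)du,
\]
and here I~would invoke the defining property $\lim_{t\to 0^+}\Phi_2(t)/t=0$ of an~$N$-function: for fixed $\epsilon>0$ and all $n$ large enough, $\Phi_2(\eta(u)/(\epsilon n))\le \eta(u)/n$ uniformly in~$u\in[0,1]$, which gives $\rho_{\Phi_2}(\eta_n/\epsilon)\le 1$ and hence $\|\eta_n\|_{(\Phi_2)}\le\epsilon$. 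Combining the three steps places $\phi\,dx$ in~$\overline{B}^{\,1}_{\Phi_1,\Phi_2}(\mathbb{R})\setminus B^1_{\Phi_1,\Phi_2}(\mathbb{R})$, providing a~nonzero element of~$T^1_{\Phi_1,\Phi_2}(\mathbb{R})$.
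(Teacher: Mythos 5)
Your proof is correct, and it takes a genuinely different route from the paper's. The paper argues by contradiction through the Sobolev-inequality characterization of torsion: it invokes Theorem~\ref{th.sobinor2a}(A), which turns $T^1_{\Phi_1,\Phi_2}(\mathbb{R})=0$ into a~Poincar\'e-type inequality $\inf_z\|f-z\|_{(\Phi_1)}\le C\|f'\|_{(\Phi_2)}$, and then defeats that inequality with a~family of plateau functions $f_a$ whose $\Phi_1$-norms blow up like $1/\Phi_1^{-1}(1/(a-1))$ while $\|f_a'\|_{(\Phi_2)}$ stays bounded. You instead exhibit a~concrete element of~$\overline{B}^1_{\Phi_1,\Phi_2}(\mathbb{R})\setminus B^1_{\Phi_1,\Phi_2}(\mathbb{R})$: the bump $\phi\,dx$ is not exact over $L^{\Phi_1}$ because the jump of any primitive forces a~nonzero constant tail, and it is a~limit of exact forms $f_n\,dx$ obtained by pushing the unit mass out to infinity through increasingly flat bumps $\eta_n$. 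Your key estimate $\|\eta_n\|_{(\Phi_2)}\to 0$ correctly uses only the $N$-function condition $\lim_{t\to 0^+}\Phi_2(t)/t=0$ (together with the monotonicity of $\Phi_2(t)/t$), so like the paper's argument it requires no growth hypothesis such as $\Delta_2$.

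In effect you have combined into a~single direct argument two ingredients the paper keeps separate: the non-exactness of a~compactly supported bump of nonzero mean (which is the concrete mechanism hiding behind the Sobolev-inequality violation) and a~mass-smearing approximation that is essentially the one the paper uses later, in its proof that $\overline{H}^1_{\Phi_1,\Phi_2}(\mathbb{R})=0$ under $\Phi_2\in\Delta_2$ (there the analogous role of your $\eta_n$ is played by the step functions $\lambda_m$, and $\Delta_2$ is needed there only because the target form is not compactly supported). Your version is more elementary and self-contained since it bypasses the abstract Banach-complex machinery of Lemma~\ref{abs.sob2}/Theorem~\ref{th.sobinor2a}; the paper's version has the advantage of exposing the quantitative Poincar\'e--Sobolev--Orlicz content of the torsion, which is one of the themes of the article.
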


\begin{proof} 
Suppose on~the~contrary that $T_{\Phi_1,\Phi_2}^{1}(\mathbb{R})= 0$. In~accordance
with Theorem~\ref{th.sobinor2a}, then there is a~Sobolev
inequality for~functions on~$\mathbb{R}$
\begin{equation}\label{sobR_orl}
\inf_{z \in \mathbb{R}} \|f-z\|_{(\Phi_1)} \leq C  \|f'\|_{(\Phi_2)}
\end{equation}
for some real positive constant $C$.

Consider the~function
$$
\theta(x)= \omega_{1/2}(x) 
= \begin{cases} 
C e^{-{\frac{1}{1-4x^2}}} & \mbox{if} \quad |x|\le 1/2, \\
0 & \mbox{if} \quad |x|> 1/2.
\end{cases}
$$
Here the~constant~$C$ is chosen so that 
$$
\int_{-\infty}^\infty \theta(x)\, dx 
= \frac{c}{2} \int_{-1}^1 e^{-{\tfrac{1}{1-t^2}}} \, dt = 1.
$$

Now, consider the~family of~smooth functions with~compact support 
$\{f_a:\mathbb{R} \to \mathbb{R} : a>0\}$, where 
$$
f_a(x) = \int_{-\infty}^x \left( \theta\left(x+\frac{3}{2}\right) 
+ \theta\left(-x+a+\frac{1}{2}\right) \right) \, dx
$$
(we owe this construction to~\cite[pp.~8--9]{Dol}).
Then $f_a(x)=1$ if $x\in [1,a]$,
$f_a(x)=0$ if $x\not\in [0,a+1]$, and $\|f_a'\|_{L^{\infty}}=: L<\infty$. Clearly,
$\|f_a-z\|_{(\Phi_1)}$ is finite only for~$z=0$. Estimate the~Orlicz norms involved 
in~(\ref{sobR_orl}). We have
$$
\rho_{\Phi_1} \left( \frac{f_a}{K} \right) 
= \int_{-\infty}^{\infty}  \Phi_1 \left( \frac{f_a(x)}{K} \right) dx
\ge \int_1^a \Phi_1 \left( \frac{1}{K} \right) dx = (a-1) \Phi_1\left(\frac{1}{K}\right). 
$$
If $\rho_{\Phi_1} \left( \frac{\Phi_1}{K} \right) \le 1$ then
$(a-1)\Phi_1\left(\frac{1}{K}\right)\le 1$, which is equivalent to 
$$
K\ge \frac{1}{\Phi_1^{-1} \left( \frac{1}{a-1} \right)}.
$$
Hence, 
$$
\|f_a\|_{(\Phi)} 
= \inf \left\{ K : \rho_{\Phi_1} \left( \frac{f_a}{K} \right) \le 1 \right\}
\ge \frac{1}{\Phi_1^{-1} \left( \frac{1}{a-1} \right)}.
$$
On~the~other hand,
\begin{multline*}
\rho_{\Phi_2}\left( \frac{f'_a}{K} \right) 
= \int_{-\infty}^{\infty}  \Phi_2 \left( \frac{f'_a(x)}{K} \right) dx
\\
= \int_0^1 \Phi_2 \left( \frac{f'_a(x)}{K} \right) dx
+ \int_a^{a+1} \Phi_2 \left( \frac{f'_a(x)}{K} \right) dx  
\le 2 \Phi_2 \left( \frac{L}{K} \right).
\end{multline*}
We have
$$
2 \Phi_2 \left( \frac{L}{K} \right) \le 1 \Longleftrightarrow 
K\ge \frac{L}{\Phi_2^{-1} \left( \frac{1}{2} \right)}.
$$
Put 
$\mathcal{M}_{f',a} = \left\{ K : \rho_{\Phi_2}\left( \frac{f'_a}{K} \right) 
\le 1\right\} $.
We have shown that if $K\ge \frac{L}{\Phi_2^{-1} \left( \frac{1}{2} \right)}$ then
$K\in \mathcal{M}_{f',a}$. Therefore,
$$
\|f'_a\|_{(\Phi_2)} 
= \inf \mathcal{M}_{f',a} \le \frac{L}{\Phi_2^{-1} \left( \frac{1}{2} \right)}.
$$
Thus, 
$$
C \ge \frac{\Phi_2^{-1}  \left( \frac{1}{2} \right)}
{L \Phi_1^{-1} \left( \frac{1}{a-1} \right)} \to \infty \quad \text{as $a\to\infty$}. 
$$
The obtained contradiction proves the~proposition.
\end{proof}

\begin{cor}
If $\Phi_1$ and $\Phi_2$ are $N$-functions then 
the~space $H^1_{\Phi_1,\Phi_2}(\mathbb{R})$ is not separated; in~particular,
$H^1_{\Phi_1,\Phi_2}(\mathbb{R}) \ne 0$.
\end{cor}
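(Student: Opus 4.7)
The plan is to derive the corollary as an essentially formal consequence of Proposition~\ref{prop.torsion_R} together with the abstract Banach complex results recalled in Section~\ref{bancomp}, with no further analysis needed.

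First I would observe that the underlying topological vector space structure on $H^1_{\Phi_1,\Phi_2}(\mathbb{R})=Z^1_{\Phi_2}(\mathbb{R})/B^1_{\Phi_1,\Phi_2}(\mathbb{R})$ is the usual quotient topology of a seminormed space by a subspace, and such a quotient is Hausdorff (separated) if and only if the subspace is closed. Since $Z^1_{\Phi_2}(\mathbb{R})$ is closed in $L^{\Phi_2}(\mathbb{R},\Lambda^1)$, separatedness of $H^1_{\Phi_1,\Phi_2}(\mathbb{R})$ is equivalent to the closedness of $B^1_{\Phi_1,\Phi_2}(\mathbb{R})$ in $L^{\Phi_2}(\mathbb{R},\Lambda^1)$.

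Second, I would invoke Lemma~\ref{lem.notorsion} applied to the Banach complex $\{\Omega^*_{\mathcal{F}},d\}$ with $\mathcal{F}=\{\Phi_1,\Phi_2\}$ (in positions $0$ and $1$). That lemma lists, among equivalent conditions, $T^1=0$ and the closedness of $B^1$. Since Proposition~\ref{prop.torsion_R} gives $T^1_{\Phi_1,\Phi_2}(\mathbb{R})\neq 0$, we conclude that $B^1_{\Phi_1,\Phi_2}(\mathbb{R})$ is not closed, which by the preceding observation means $H^1_{\Phi_1,\Phi_2}(\mathbb{R})$ is not separated. The closure of the zero class is exactly $\overline{B}^1/B^1=T^1\neq\{0\}$, so the non-Hausdorff character is witnessed concretely by the nonzero torsion.

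For the ``in particular'' statement, non-vanishing follows immediately from the exact sequence
\[
0\to T^1_{\Phi_1,\Phi_2}(\mathbb{R})\to H^1_{\Phi_1,\Phi_2}(\mathbb{R})\to \overline{H}^1_{\Phi_1,\Phi_2}(\mathbb{R})\to 0
\]
recalled in item~(c) of Section~\ref{bancomp}: the injection $T^1\hookrightarrow H^1$ together with $T^1\neq 0$ forces $H^1\neq 0$. There is no real obstacle here; the whole corollary is a bookkeeping exercise once Proposition~\ref{prop.torsion_R} is in hand, and the only point requiring any care is the (standard) identification of separatedness of a quotient TVS with closedness of the relevant subspace.
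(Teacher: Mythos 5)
Your argument is correct and is precisely the (unwritten) proof the paper has in mind: the corollary is stated immediately after Proposition~\ref{prop.torsion_R} with no separate proof, because nonvanishing of the torsion together with Lemma~\ref{lem.notorsion} (nonclosedness of $B^1$) and the exact sequence $0\to T^1\to H^1\to\overline{H}^1\to 0$ yields both the non-Hausdorff property and $H^1\ne 0$. Your write-up simply makes explicit the bookkeeping the paper leaves to the reader.
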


\begin{prop} 
If $\Phi_1$ and $\Phi_2$ are $N$-functions and $\Phi_2\in\Delta_2$ then 
$\overline{H}^1_{\Phi_1,\Phi_2}(\mathbb{R}) = 0$.
\end{prop}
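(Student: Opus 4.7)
The plan is to establish the inclusion $Z^1_{\Phi_2}(\mathbb{R})\subseteq\overline{B}^1_{\Phi_1,\Phi_2}(\mathbb{R})$, which immediately yields $\overline{H}^1_{\Phi_1,\Phi_2}(\mathbb{R})=0$. Since $\mathbb{R}$ is $1$-dimensional, every $1$-form is top-dimensional and hence automatically closed, so $Z^1_{\Phi_2}(\mathbb{R})$ consists of all forms $f\,dx$ with $f\in L^{\Phi_2}(\mathbb{R})$. It therefore suffices to approximate such an $f$ in the Luxemburg $L^{\Phi_2}$-norm by weak derivatives $g'$ of functions $g\in L^{\Phi_1}(\mathbb{R})$.

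First I would exploit the $\Delta_2$-hypothesis to show that compactly supported functions are dense in $L^{\Phi_2}(\mathbb{R})$. Iterating $\Phi_2(2t)\le K\Phi_2(t)$ gives $\rho_{\Phi_2}(\lambda f)<\infty$ for every $\lambda>0$ whenever $f\in L^{\Phi_2}(\mathbb{R})$. Dominated convergence then yields $\rho_{\Phi_2}(\lambda(f-f\chi_{[-n,n]}))\to 0$ for every $\lambda>0$, which is equivalent to $\|f-f\chi_{[-n,n]}\|_{(\Phi_2)}\to 0$.

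Next, given a compactly supported $f\in L^{\Phi_2}(\mathbb{R})$, let $c=\int_{\mathbb{R}}f\,dx$ (finite, since $f\in L^1$). The idea is to subtract off this total mass with a bump of small $L^{\Phi_2}$-norm. Take $\eta_m=\frac{1}{m}\chi_{[N_m,N_m+m]}$ with $N_m$ so large that $\eta_m$ has support disjoint from that of $f$. A direct computation gives
\[
\|\eta_m\|_{(\Phi_2)}=\frac{1}{m\,\Phi_2^{-1}(1/m)}=\frac{\Phi_2(s_m)}{s_m}\quad\text{where }s_m=\Phi_2^{-1}(1/m),
\]
and property~(iii) of $N$-functions, namely $\Phi_2(t)/t\to 0$ as $t\to 0^+$, forces $\|\eta_m\|_{(\Phi_2)}\to 0$. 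Now define $g_m(x)=\int_{-\infty}^x(f-c\eta_m)(t)\,dt$. Because $\int(f-c\eta_m)=0$, the primitive $g_m$ is continuous with compact support, hence bounded, and so $g_m\in L^{\Phi_1}(\mathbb{R})$ with $g_m'=f-c\eta_m$ weakly. Thus $f-c\eta_m\in B^1_{\Phi_1,\Phi_2}(\mathbb{R})$ while $\|c\eta_m\|_{(\Phi_2)}\to 0$, placing $f$ in $\overline{B}^1_{\Phi_1,\Phi_2}(\mathbb{R})$.

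Combining these two steps with the closedness of $\overline{B}^1_{\Phi_1,\Phi_2}(\mathbb{R})$ in $L^{\Phi_2}(\mathbb{R},\Lambda^1)$ yields the desired inclusion. I expect the main subtlety to be the bump construction: the sub-linear growth of $\Phi_2$ at the origin is precisely what allows a unit-mass bump to have arbitrarily small Luxemburg norm, while the $\Delta_2$ hypothesis enters only through the density step.
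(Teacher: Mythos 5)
Your proof is correct and follows the same fundamental strategy as the paper's: truncate a $1$-form $f\,dx\in L^{\Phi_2}(\mathbb{R},\Lambda^1)$ to compact support (which is where $\Phi_2\in\Delta_2$ enters, via absolute continuity of the norm), then balance the total integral with a wide, low bump whose Luxemburg norm tends to zero, so that the corrected function has zero mass and its primitive is bounded and compactly supported, hence in $L^{\Phi_1}$. Where you differ is in the organization and the bump itself: the paper does the truncation and the mass-balancing \emph{simultaneously}, constructing a correction $\lambda_m=\mathrm{sign}(C_m)\,\varepsilon_m\chi_{[-|C_m|/2\varepsilon_m,\,|C_m|/2\varepsilon_m]}$ whose height $\varepsilon_m=t_m/m$ is determined by solving $\Phi_2(t_m)/t_m=\tfrac{1}{m|C_m|}$, arranged so that $\|\lambda_m\|_{(\Phi_2)}=\tfrac1m$ exactly, with $C_m=\int_{-m}^m a$ varying with $m$. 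By first reducing to compactly supported $f$ (so the total mass $c=\int f$ is a fixed constant rather than a drifting sequence), you can use the much simpler bump $\eta_m=\tfrac1m\chi_{[N_m,N_m+m]}$, with $\|\eta_m\|_{(\Phi_2)}=\Phi_2(s_m)/s_m\to 0$ for $s_m=\Phi_2^{-1}(1/m)$; no transcendental equation is needed and the role of the $N$-function property $\Phi_2(t)/t\to 0$ as $t\to0^+$ is laid bare. The two arguments use the $\Delta_2$-hypothesis at exactly the same point and agree thereafter, so this is essentially the same proof presented in a cleaner, two-step form.
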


\smallskip

\begin{proof}
Let $\omega=a(x)dx\in L^{\Phi_2}(\mathbb{R})$. For~each~$n$, put
$$
C_m= \int_{-m}^m a(x) \, dx.
$$
If $C_m=0$ then put $\lambda_m(x)\equiv 0$ for all $x\in\mathbb{R}$. 
If $C_m\ne 0$ then put
$$
\lambda_m(x)
= \mathop{\rm sign} C_m \varepsilon_m \, 
\chi \left[ -\frac{|C_m|}{2\varepsilon_m}, \frac{|C_m|}{2\varepsilon_m} \right], 
$$
where $\varepsilon_m= t_m/m$ and $t_m$ is the~only root of~the~equation 
$$
\frac{\Phi_2(t_m)}{t_m} = \frac{1}{m|C_m|}.
$$
(The function $t\mapsto \Phi_2(t)/t$ is strictly increasing; see, for~example,
\cite{KraRu}). We obviously have
$$
\int_{\mathbb{R}} \lambda_m(x)\,dx = C_m = \int_{-m}^m a(x) \, dx.
$$ 
Compute the~norm $\|\lambda_m\|_{(\Phi_2)}$. We have
$$
\rho_{\Phi_2}\left(\frac{\lambda_m}{K}\right) 
= \int_{-|C_m|/2\varepsilon_m}^{|C_m|/2\varepsilon_m} 
\Phi_2\left( \frac{\varepsilon_m}{K} \right) \, dx
= \frac{|C_m|}{\varepsilon_m} \Phi_2\left( \frac{\varepsilon_m}{K} \right).
$$
Thus, 
\begin{multline*}
\rho_{\Phi_2}\left(\frac{\lambda_m}{K}\right) \le 1 \Longleftrightarrow
\frac{|C_m|}{\varepsilon_m} \Phi_2\left( \frac{\varepsilon_m}{K} \right) \le 1
\Longleftrightarrow
\Phi_2\left( \frac{\varepsilon_m}{K} \right) \le \frac{\varepsilon_m}{|C_m|}  \\
\Longleftrightarrow
\frac{\varepsilon_m}{K} \le \Phi_2^{-1}\left( \frac{\varepsilon_m}{|C_m|} \right) 
\Longleftrightarrow K\ge \frac{\varepsilon_m}{\Phi_2^{-1} 
\left( \frac{\varepsilon_m}{|C_m|} \right) }.
\end{multline*}
Here $\Phi_2^{-1}$ stands for~the~inverse function to~$\Phi_2:[0,\infty)\to [0,\infty)$.
Hence, $\|\lambda_m\|_{(\Phi_2)}=\frac{\varepsilon_m}{\Phi_2^{-1} (\varepsilon_m/|C_m|)}$.
By~the~choice of~$\varepsilon_m$, 
$$
\frac{\Phi_2(m\varepsilon_m)}{m\varepsilon_m} = \frac{1}{m|C_m|},
$$
and so $\|\lambda_m\|_{(\Phi_2)}=\frac{1}{m}$.

Let $b_{m}(x):= 
\int_{-\infty}^{x}\left(\chi_{[-m,m]}(t)a(t)-\lambda_{m}(t)\right)dt$. Since $b_m$ has
compact support, $b_{m}\in L^{\Phi_1}(\mathbb{R})$ for~each~$m$. Furthermore,
$\| db_{m}-\omega \|_{(\Phi_2)}\le \|a\|_{L^{(\Phi_2)}(\mathbb{R}\setminus[-m,m])}+
\|\lambda_m\|_{L^{(\Phi_2)}(\mathbb{R})}\to 0$ as $m\to\infty$ since for $\Phi_2\in\Delta_2$
all functions in~$L^{\Phi_2}$ have absolutely continuous norm (\cite[Theorem~10.3]{KraRu}). 
Thus, $\overline{H}^1_{\Phi_1,\Phi_2}(\mathbb{R}) = 0$.
\end{proof}

\medskip

All the results of this section are also valid for the half-line $\mathbb{R}_{+}$ 
(with similar proofs).

\section{The $L_{\Phi_1,\Phi_2}$-Cohomology of~the~Hyperbolic Plane}\label{cohom_hyp}

We will need the~following Orlicz versions of~Propositions~8.3 and 8.4 in~\cite{GT06},
which are proved in~absolutely the~same manner:
\smallskip

\begin{prop}\label{wedge.compl1} 
Let $M$ be a~complete manifold of~dimension~$n$ and let $(\Phi_1,\Psi_1)$ 
and $(\Phi_2,\Psi_2)$ be two pairs of~complementary Orlicz functions. 
Suppose that $\alpha\in Z_{\Phi_2}^{k}(X)$
and there exists a smooth closed $(n-k)$-form $\gamma$ such that 
$\gamma\in Z_{\Psi_1}^{n-k}(X)$, $\gamma\wedge\alpha\in L^1(X,\Lambda^n)$, and
$$
\int_M \gamma\wedge\alpha\neq0,
$$
then $\alpha\notin B_{\Phi_1,\Phi_2}^{k}(X)$. In particular,
$H_{\Phi_1,\Phi_2}^k(X)\ne 0$.
\end{prop}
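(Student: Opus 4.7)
The plan is to argue by contradiction. Suppose $\alpha\in B_{\Phi_1,\Phi_2}^k(X)$, so that $\alpha=d\beta$ weakly for some $\beta\in L^{\Phi_1}(X,\Lambda^{k-1})$. I will show this forces $\int_M\gamma\wedge\alpha=0$, contradicting the hypothesis, which will prove $\alpha\notin B_{\Phi_1,\Phi_2}^k(X)$; the final assertion $H_{\Phi_1,\Phi_2}^k(X)\ne 0$ then follows because $\alpha\in Z_{\Phi_2}^k(X)$ represents a nonzero cohomology class.

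The main construction I would need is a family of smooth compactly supported cutoff functions $\chi_R$ with $0\le\chi_R\le 1$, $\chi_R\equiv 1$ on the geodesic ball $B(x_0,R)$ about a fixed basepoint, vanishing outside $B(x_0,2R)$, and satisfying $|d\chi_R|\le C/R$ pointwise (the standard smoothing-of-distance construction, which is available on any complete Riemannian manifold). Since $\gamma$ is smooth and closed, $\chi_R\gamma$ is a compactly supported smooth $(n-k)$-form with $d(\chi_R\gamma)=d\chi_R\wedge\gamma$. Testing the defining identity of the weak differential against $\chi_R\gamma$ (via a partition of unity into orientable charts if $M$ is non-orientable) yields, up to a fixed sign depending only on $k$ and $n$,
$$
\int_M \chi_R\,\gamma\wedge\alpha \;=\; \pm\int_M \beta\wedge d\chi_R\wedge\gamma.
$$

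Next I would pass to the limit $R\to\infty$. For the left-hand side, the bound $|\chi_R\gamma\wedge\alpha|\le|\gamma\wedge\alpha|\in L^1(M,\Lambda^n)$ and dominated convergence give the limit $\int_M\gamma\wedge\alpha\ne 0$. For the right-hand side, the Orlicz H\"older inequality in Luxemburg norms produces
$$
\Bigl|\int_M\beta\wedge d\chi_R\wedge\gamma\Bigr|
\le 2\,\|\beta\|_{(\Phi_1)}\,\|d\chi_R\wedge\gamma\|_{(\Psi_1)}.
$$
The pointwise bound $|d\chi_R\wedge\gamma|\le(C/R)|\gamma|$ together with monotonicity and positive homogeneity of the Luxemburg norm gives $\|d\chi_R\wedge\gamma\|_{(\Psi_1)}\le(C/R)\|\gamma\|_{(\Psi_1)}\to 0$. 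Hence the right-hand side tends to $0$, and the two limits contradict each other.

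I expect the main delicate point to be the bookkeeping around the weak differential when $\beta$ is only $L^{\Phi_1}$-integrable: one cannot invoke Stokes' theorem directly and must instead unwind the defining identity of $d$ from the definition in the excerpt, patched across orientable charts if necessary. A pleasant feature of the argument is that no $\Delta_2$ or $\nabla_2$ growth assumption on the $N$-functions is required, because decay of $d\chi_R$ controls the Luxemburg norm of $d\chi_R\wedge\gamma$ directly, sidestepping any need for absolute continuity of the norm on $L^{\Psi_1}$.
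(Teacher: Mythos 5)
Your proof is correct and follows essentially the same route the paper intends (the paper defers to the proof of Proposition~8.3 in~\cite{GT06}, which is exactly this cutoff--Stokes--H\"older argument on a complete manifold, with the $L^{p}$--$L^{p'}$ H\"older step replaced by the Orlicz version). The only cosmetic difference is that you apply Orlicz--H\"older to the pair $(\beta,\,d\chi_R\wedge\gamma)$ and then push $\|d\chi_R\wedge\gamma\|_{(\Psi_1)}\le (C/R)\|\gamma\|_{(\Psi_1)}$ to zero, whereas the reference first concludes $\beta\wedge\gamma\in L^1$ by H\"older and then invokes the Gaffney-type vanishing $\int_M d(\beta\wedge\gamma)=0$; these are the same estimate organised differently, and you correctly observe that no $\Delta_2$/$\nabla_2$ assumption is needed.
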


\begin{prop}\label{wedge.compl2} 
Let $M$ be a~complete manifold of~dimension~$n$ and let $(\Phi_1,\Psi_1)$ 
and $(\Phi_2,\Psi_2)$ be two pairs of~complementary Orlicz functions. 
Suppose that $\alpha\in Z_{\Phi_2}^{k}(X)$
and there exists a smooth closed $(n-k)$-form 
$\gamma\in Z_{\Psi_1}^{n-k}(X)\cap Z_{\Psi_2}^{n-k}(X)$ such that
$$
\int_M\gamma\wedge\alpha\neq0,
$$
then $\alpha\notin\overline{B}_{\Phi_1,\Phi_2}^{k}(X)$. In particular,
$\overline{H}_{\Phi_1,\Phi_2}^k(X)\ne 0$.
\end{prop}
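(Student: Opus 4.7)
The plan is to argue by contradiction: assume $\alpha\in\overline{B}_{\Phi_1,\Phi_2}^{k}(X)$ and derive $\int_M\gamma\wedge\alpha=0$, contradicting the hypothesis. So pick a sequence $\alpha_m=d\beta_m\in B_{\Phi_1,\Phi_2}^{k}(X)$, with $\beta_m\in L^{\Phi_1}(X,\Lambda^{k-1})$, such that $\alpha_m\to\alpha$ in $L^{\Phi_2}(X,\Lambda^k)$.

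The first step is to show, for each fixed $m$, that $\int_M\gamma\wedge\alpha_m=0$. This is essentially the content of Proposition~\ref{wedge.compl1} (or rather of its proof): since $\gamma\in L^{\Psi_1}$ and $\beta_m\in L^{\Phi_1}$, the H\"older inequality for complementary Orlicz spaces yields $\gamma\wedge\beta_m\in L^{1}(X,\Lambda^{n-1})$, so, using completeness of $M$ and the usual cutoff/Stokes argument from~\cite{GT06}, one concludes
$$
\int_M\gamma\wedge d\beta_m=\pm\int_M d\gamma\wedge\beta_m=0,
$$
because $d\gamma=0$. This is the step where the assumption $\gamma\in Z^{n-k}_{\Psi_1}(X)$ is used.

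The second step is to pass to the limit $m\to\infty$. This is where the additional hypothesis $\gamma\in L^{\Psi_2}$ (encoded in $\gamma\in Z^{n-k}_{\Psi_2}(X)$) enters. By the H\"older inequality for the complementary pair $(\Phi_2,\Psi_2)$,
$$
\left|\int_M\gamma\wedge(\alpha-\alpha_m)\right|\le C\,\|\gamma\|_{\Psi_2}\|\alpha-\alpha_m\|_{(\Phi_2)}\longrightarrow 0,
$$
so $\int_M\gamma\wedge\alpha=\lim_m\int_M\gamma\wedge\alpha_m=0$, yielding the desired contradiction. The statement about $\overline{H}^k_{\Phi_1,\Phi_2}(X)\neq 0$ then follows, since $\alpha\in Z^k_{\Phi_2}(X)$ represents a nonzero class in the reduced cohomology.

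The main obstacle, as in the unreduced case, is the Stokes-type identity $\int_M\gamma\wedge d\beta_m=\pm\int_M d\gamma\wedge\beta_m$ for non-compactly-supported $\beta_m$ on the complete manifold $M$. Once this is granted (by the standard exhaustion-by-cutoffs argument used to prove Proposition~\ref{wedge.compl1}), the reduced version requires no genuinely new idea beyond the additional H\"older estimate against $\Psi_2$, which is exactly what the stronger integrability hypothesis on $\gamma$ is tailored to supply.
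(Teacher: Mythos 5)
Your proof is correct and takes the same route as the paper: the paper itself refers to Propositions~8.3 and~8.4 of~\cite{GT06} and states that the Orlicz versions are ``proved in absolutely the same manner,'' which is exactly the Stokes-with-cutoffs-plus-H\"older argument you give. You also correctly isolate where each integrability assumption on $\gamma$ is used, namely $\gamma\in L^{\Psi_1}$ for the term-by-term Stokes identity $\int_M\gamma\wedge d\beta_m=0$ and $\gamma\in L^{\Psi_2}$ (together with $d\beta_m\to\alpha$ in $L^{\Phi_2}$, and with $\gamma\wedge\alpha,\ \gamma\wedge d\beta_m\in L^1$) for the passage to the limit.
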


The~hyperbolic plane $\mathbb{H}^{2}$ is the~Riemannian manifold that can be 
modelled as the~space~$\mathbb{R}^{2}$ endowed with the~Riemannian metric 
$$
ds^2=e^{2z} dy^2 + dz^2.
$$

For an~$N$-function~$\Phi$, introduce the~condition
$$
\int_0^1 \frac{\Phi(v)}{v^2} \, dv <\infty.
\eqno{(A)}
$$
(The~upper integration limit~$1$ can be replaced by any positive number.)

\begin{thm}\label{orcoho_h2}
If $\Phi_1$ and $\Phi_2$ are $N$-functions such that their complementary
$N$-functions~$\Psi_1$ and $\Psi_2$ and the~function~$\Phi_2$ satisfy condition~$(A)$
then
$$ 
 \dim (\bar{H}_{\Phi_1,\Phi_2}^{1}(\mathbb{H}^{2}))=\infty\,.
$$
\end{thm}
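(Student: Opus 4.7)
The plan is to apply Proposition~\ref{wedge.compl2} to a sequence $\{\alpha_n\}$ of closed forms in $Z^1_{\Phi_2}(\mathbb{H}^2)$ paired against a sequence $\{\gamma_n\}$ of smooth closed forms in $Z^1_{\Psi_1}(\mathbb{H}^2)\cap Z^1_{\Psi_2}(\mathbb{H}^2)$ chosen so that $\bigl(\int_{\mathbb{H}^2}\gamma_m\wedge\alpha_n\bigr)_{m,n}$ is a nonzero scalar multiple of the identity. Once this is achieved, linear independence of the classes $[\alpha_n]$ in $\overline{H}^1_{\Phi_1,\Phi_2}(\mathbb{H}^2)$ is automatic: any relation $\sum c_n\alpha_n\in\overline{B}^1_{\Phi_1,\Phi_2}(\mathbb{H}^2)$ would, by the contrapositive of Proposition~\ref{wedge.compl2}, force $\int_{\mathbb{H}^2}\gamma_m\wedge(\sum c_n\alpha_n)=0$ for every~$m$, giving $c_m=0$.

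To construct the test forms, I would fix smooth functions $F,G:\mathbb{R}\to\mathbb{R}$ with supports in~$[0,1]$ and $\int_{\mathbb{R}}G(y)F'(y)\,dy\ne 0$, together with a smooth cutoff $\chi:\mathbb{R}\to\mathbb{R}$ equal to~$0$ on~$(-\infty,0]$ and~$1$ on~$[1,\infty)$. For $n\in\mathbb{N}$ set
$$
f_n(y,z):=\chi(z)F(y-2n), \qquad g_n(y,z):=\chi(z)G(y-2n),
$$
and define the smooth exact (hence closed) forms $\alpha_n:=df_n$ and $\gamma_n:=dg_n$. Their $y$-supports $[2n,2n+1]$ are pairwise disjoint.

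The main technical step is the Orlicz integrability. On the half-space $\{z\geq 1\}$ one has $\alpha_n=F'(y-2n)\,dy$, whose pointwise length at~$(y,z)$ equals $e^{-z}|F'(y-2n)|$; using the volume element $e^z\,dy\,dz$ and the substitution $u=e^{-z}|F'(y-2n)|/K$, the contribution to $\rho_{\Phi_2}(\alpha_n/K)$ reduces to a multiple of $\int_0^{u_0}\Phi_2(u)/u^2\,du$ with finite~$u_0$, which is convergent by condition~$(A)$ on~$\Phi_2$. The slab $0\leq z\leq 1$ is compactly supported in both $y$ and~$z$ and so contributes a trivially finite amount. The identical substitution applied with $\Phi_2$ replaced by $\Psi_1$ and then $\Psi_2$ places $\gamma_n$ into~$L^{\Psi_1}\cap L^{\Psi_2}$.

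For the pairing, when $m\ne n$ the form $\gamma_m\wedge\alpha_n$ vanishes identically by disjointness of $y$-supports. When $m=n$, I would rewrite $\gamma_n\wedge\alpha_n=d(g_n\,df_n)$ and apply Stokes' theorem on the exhaustion $\{z\leq R\}$: the sides $y=\pm\infty$ disappear by compact support of~$G$ in~$y$, the side $z=-\infty$ disappears because $\chi(-\infty)=0$, and only the flux through $\{z=R\}$ for $R\geq 1$ survives, contributing $-\int_{\mathbb{R}}G(y)F'(y)\,dy\ne 0$. The main anticipated obstacle is precisely the integrability verification: if condition~$(A)$ failed at the origin, the integral $\int_0^{u_0}\Phi(u)/u^2\,du$ arising from the $z\to+\infty$ tail would diverge and none of the forms $\alpha_n,\gamma_n$ would lie in the required Orlicz spaces.
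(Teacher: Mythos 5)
Your proof is correct and follows essentially the same path as the paper: build exact test $1$-forms from products of compactly supported $y$-profiles with a $z$-cutoff, verify Orlicz integrability via the substitution $v = ce^{-z}$ reducing the tail integral to $\int_0^{u_0}\Phi(v)/v^2\,dv$, and apply the wedge-pairing criterion (Proposition~\ref{wedge.compl2}) to obtain infinitely many linearly independent classes. You simply make the paper's ``use the isometry group'' step explicit by taking $y$-translates by $2n$ (which are isometries since the metric $e^{2z}dy^2+dz^2$ is $y$-translation invariant), and you correctly cite Proposition~\ref{wedge.compl2} for the reduced-cohomology conclusion, whereas the paper's text cites Proposition~\ref{wedge.compl1} at that point, which appears to be a slip.
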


We will need the~following lemma, which is in~fact Lemma~10.2 in~\cite{GT06}:

\begin{lem}\label{lem_hn} 
There exist two smooth functions $f$ and $g$ on $\mathbb{H}^{2}$ 
such that

{\rm(1)} $f$ and $g$ are nonnegative;

{\rm(2)}  $f(y,z)=g(y,z)=0$ if $z\le0$ or $|y|\ge1$;

{\rm(3)}  $df$ and $dg\in L^{r}(\mathbb{H}^{2},\Lambda^{1})$ for any $1<r\le\infty$;

{\rm(4)}  the support of $df\wedge dg$ is contained in $\{(y,z):|y|\le1\,,\,0\le z\le1\}$;

{\rm(5)}  $df\wedge dg\ge0$;

{\rm(6)}  $\int_{\mathbb{H}^{2}}df\wedge dg=1$;

{\rm(7)}  $\frac{\partial f}{\partial y}$ and
 $\frac{\partial g}{\partial y}\in L^{\infty}(\mathbb{H}^{2})$;

{\rm(8)} $\frac{\partial f}{\partial z}$ and
  $\frac{\partial g}{\partial z}$ have compact support.
\end{lem}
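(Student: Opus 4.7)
The plan is to build $f$ and $g$ in product form with a common $z$-factor, $f(y,z)=F(y)\phi(z)$ and $g(y,z)=G(y)\phi(z)$, so that $df\wedge dg$ collapses to an explicit expression and many properties decouple. On $\mathbb{H}^{2}$ with $ds^{2}=e^{2z}dy^{2}+dz^{2}$ the volume form is $e^{z}\,dy\wedge dz$ and $|A\,dy+B\,dz|^{2}=e^{-2z}A^{2}+B^{2}$, so exponential decay in $z$ of the $dy$-component is what makes $L^{r}$-integrability of $df$ automatic for $r>1$.

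I would choose $\phi\in C^{\infty}(\mathbb{R})$ with $\phi\equiv 0$ on $(-\infty,0]$, $\phi\equiv 1$ on $[1,\infty)$, $0\le\phi\le 1$, and $\phi'\ge 0$; and a bump $\chi\in C_{c}^{\infty}((-1,1))$ with $\chi\ge 0$, rescaled so that $\int_{-1}^{1}\chi(y)^{2}\,dy=2$. Set $F(y)=(y+1)\chi(y)$ and $G(y)=\chi(y)$. Then $f,g\ge 0$ (property~(1)) and both vanish whenever $z\le 0$ or $|y|\ge 1$ (property~(2)).

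From $\frac{\partial f}{\partial y}=F'\phi$ (bounded on $\mathbb{H}^{2}$) and $\frac{\partial f}{\partial z}=F\phi'$ (supported in $[-1,1]\times[0,1]$) one reads off (7) and (8) (and similarly for $g$). For (3) I would split $\mathbb{H}^{2}$ into the strip $\{0\le z\le 1\}$, on which $|df|$ is bounded and the hyperbolic volume of the support is finite, and the region $\{z\ge 1\}$, on which $\phi'\equiv 0$ and hence $|df|=e^{-z}|F'|$, so that
$$\int_{\{z\ge 1\}}|df|^{r}\,d\mu_{X}=\int_{-1}^{1}|F'(y)|^{r}\,dy\cdot\int_{1}^{\infty}e^{(1-r)z}\,dz,$$
finite for every $r>1$; the uniform bound $|df|^{2}\le\|F'\|_\infty^{2}+\|F\|_\infty^{2}\|\phi'\|_\infty^{2}$ covers the case $r=\infty$, and the same argument handles $dg$. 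A direct expansion gives
$$df\wedge dg=\phi(z)\phi'(z)\bigl(F'(y)G(y)-F(y)G'(y)\bigr)\,dy\wedge dz,$$
which is supported in $[-1,1]\times[0,1]$, i.e.\ (4); with our $F,G$ one computes $F'G-FG'=\chi^{2}\ge 0$, which is (5); and
$$\int_{\mathbb{H}^{2}}df\wedge dg=\Bigl(\int_{0}^{1}\phi\phi'\,dz\Bigr)\Bigl(\int_{-1}^{1}\chi^{2}\,dy\Bigr)=\tfrac{1}{2}\cdot 2=1,$$
which is (6).

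The only nonobvious design point is simultaneously arranging $F,G\ge 0$, compactly supported in $(-1,1)$, with $F'G-FG'\ge 0$ and of nonzero integral: naive bumps with disjoint supports make $F'G-FG'$ vanish identically. The product-with-linear-factor trick $F=(y+1)\chi$, $G=\chi$ resolves this in one stroke by turning $F'G-FG'$ into the perfect square $\chi^{2}$; everything else is then routine verification.
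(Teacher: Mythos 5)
Your construction is correct and is essentially the same as the paper's (which takes $f=h_1(y)k(z)$, $g=h_2(y)k(z)$ with a $y$-bump times a $z$-cutoff, exactly your product form): all eight properties check out, including the key computation $df\wedge dg=\phi\phi'(F'G-FG')\,dy\wedge dz$ and the $L^r$ estimate via $|df|=e^{-z}|F'|$ on $\{z\ge 1\}$. The only difference is cosmetic: the paper imposes the separate sign conditions $h_1'h_2\ge 0$ and $h_1h_2'\le 0$ to force property (5), whereas your choice $F=(y+1)\chi$, $G=\chi$ need not satisfy these individually but yields $F'G-FG'=\chi^2\ge 0$ directly, which is all that is actually needed.
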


\medskip

We will also need the~following generalization of~item~(3) above:  

\begin{lem}\label{lem_hn_orl}
If $\Phi$ is an~$N$-function satisfying condition~$(A)$  
then $df,\,dg\in L^{\Phi}(\mathbb{H}^{2},\Lambda^{1})$.
\end{lem}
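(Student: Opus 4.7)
The plan is to directly estimate the modular $\rho_\Phi(df/K)$ for some $K>0$ using the hyperbolic structure together with the support/boundedness properties furnished by Lemma~\ref{lem_hn}. In coordinates $(y,z)$ on $\mathbb{H}^2$, the metric $ds^2=e^{2z}dy^2+dz^2$ gives the cotangent norm
$$
|A\,dy+B\,dz|^2 = e^{-2z}A^2+B^2,
$$
and volume element $d\mu_{\mathbb{H}^2}=e^z\,dy\,dz$. Thus for $\omega=df$,
$$
|df|^2 = e^{-2z}(\partial_y f)^2 + (\partial_z f)^2,
$$
and by properties (2), (7), (8) of Lemma~\ref{lem_hn}, $\partial_y f$ is bounded by some $M<\infty$ with support in $\{|y|\le 1,\,z\ge 0\}$, while $\partial_z f$ has compact support, say in $\{|y|\le 1,\,0\le z\le z_0\}$.

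I would split $\operatorname{supp}(df)\subset \{|y|\le 1,\,z\ge 0\}$ into $\Omega_1=\{|y|\le 1,\,0\le z\le z_0\}$ and $\Omega_2=\{|y|\le 1,\,z>z_0\}$. On $\Omega_1$ the form $|df|$ is uniformly bounded by a constant $C$, so for any $K>0$,
$$
\int_{\Omega_1}\Phi\!\left(\frac{|df|}{K}\right)e^z\,dy\,dz \le \Phi(C/K)\cdot 2(e^{z_0}-1)<\infty.
$$
On $\Omega_2$ we have $\partial_z f\equiv 0$, whence $|df|\le M e^{-z}$, and so
$$
\int_{\Omega_2}\Phi\!\left(\frac{|df|}{K}\right)e^z\,dy\,dz \le 2\int_{z_0}^\infty \Phi\!\left(\frac{Me^{-z}}{K}\right) e^z\,dz.
$$
The key step is the substitution $v=Me^{-z}/K$, under which $dz=-dv/v$ and $e^z=M/(Kv)$, giving
$$
\int_{z_0}^\infty \Phi\!\left(\frac{Me^{-z}}{K}\right) e^z\,dz = \frac{M}{K}\int_{0}^{v_0}\frac{\Phi(v)}{v^2}\,dv, \qquad v_0=\frac{Me^{-z_0}}{K},
$$
which is finite precisely by condition~$(A)$.

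Combining both pieces yields $\rho_\Phi(df/K)<\infty$ for every $K>0$, hence $df\in L^\Phi(\mathbb{H}^2,\Lambda^1)$. The argument for $dg$ is identical since $g$ satisfies the same properties (2), (7), (8). There is no serious obstacle here; the only thing one must get right is the change of variables that turns the hyperbolic volume growth $e^z$ against the exponential decay of $|df|$ into exactly the Orlicz integral $\int_0^{v_0}\Phi(v)/v^2\,dv$ appearing in $(A)$.
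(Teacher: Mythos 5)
Your proof is correct, and the crux — the substitution $v\propto e^{-z}$ turning the hyperbolic volume growth $e^z$ against the decay $|df|\lesssim e^{-z}$ into exactly the integral $\int_0^{v_0}\Phi(v)/v^2\,dv$ of condition~$(A)$ — is identical to the paper's. The difference is in the decomposition: the paper re-opens the explicit construction $f(y,z)=h_1(y)k(z)$ of Lemma~\ref{lem_hn} and splits the form $df=h_1k'\,dz+kh'_1\,dy$ into a compactly supported piece plus a piece bounded by $De^{-z}$ on the whole strip $\{|y|\le1,\,z\ge0\}$, then invokes linearity of $L^\Phi$; you instead split the domain into a bounded strip $\Omega_1$ (finite hyperbolic volume, $|df|$ bounded) and an unbounded strip $\Omega_2$ (where $\partial_z f\equiv0$ so $|df|=e^{-z}|\partial_y f|\le Me^{-z}$), and bound the modular directly by a sum over the two regions. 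Your route has the small merit that it uses only the abstract properties (2), (7), (8) asserted in Lemma~\ref{lem_hn}, rather than the internal product structure of $f$ and $g$, so it works with the lemma taken as a black box; otherwise the two arguments are the same.
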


\begin{proof}
Recall the~construction of~the~functions~$f$ and~$g$ of~\cite[Lemma~10.2]{GT06}.

Choose smooth functions $h_{1}$, $h_{2}$, and $k:\mathbb{R}\rightarrow\mathbb{R}$
with the~following properties:

(1) $h_{1}$, $h_{2}$, and $k$ are nonnegative;

(2) $h_{i}(y)=0$ if $|y|\geq1$;

(3) $h'_1(y)h_{2}(y)\geq0$ and $h_{1}(y)h'_{2}(y)\leq0$ for all $y$;

(4) the support of~the~function $h'_{1}(y)h_{2}(y)-h_{1}(y)h'_2(y)$
is not empty;

(5) $k'(z)\ge 0$ for all $z$;

(6) $k(z)=1$ if $z\ge 1$ and $k(z)=0$ if $z\le 0$.

\smallskip

Then $f$ and $g$ are defined as $f(y,z):=h_{1}(y)k(z)$ and $g(y,z):=h_{2}(y)k(z)$
respectively. 

We will now prove that $df\in L^\Phi$ by modifying the~argument of~the~proof
of~\cite[Lemma~10.2]{GT06}.

Indeed,
$$
df=h_1(y)k'(z)dz+k(z)h'_1(y)dy.
$$
The first summand $h_{1}(y)k'(z)dz$ has compact support, and
the second summand $k(z)h'_1(y)dy$ is zero outside the~infinite 
rectangle $Q=\{|y|\le 1\,; z\ge 0\}$.

Choose $D<\infty$ such that $|k(z)h'_1(y)|\leq D$ on $Q$.
We have 
$$
|k(z)h'_1(y)dy|\le D\,|dy|=D\, e^{-z}.
$$
Since the~area element of~$\mathbb{H}^2$ is $dA=e^{z}dydz$, for any $a>0$ we infer
$$
\int_{\mathbb{H}^2} \Phi(a|k(z)h'_1(y)dy|) dA
\le \int_{Q} \Phi(a D e^{-z}) e^{z}\, dy\,dz 
= aD \int_0^\infty \frac{\Phi(aD e^{-z})}{aD e^{-z}} \, dz.
$$
Putting $aDe^{-z}=v$ in~the~last integral, we get
$$
aD \int_0^\infty \frac{\Phi(aD e^{-z})}{aD e^{-z}} \, dz 
= aD \int_0^1 \frac{\Phi(v)}{v^2}\, dv < \infty.
$$ 
Thus, $\rho_\Phi(a k(z) h'_1(y)\, dy)<\infty$ for any $a$.
Consequently, $k(z)h'_1 v\in L^\Phi(\mathbb{H}^2,\Lambda^1)$.
Thus, $df=h_1(y)k'(z)dz+k(z)h'_1(y)dy$ also lies 
in~$L^\Phi(\mathbb{H}^2,\Lambda^k)$. 

The~lemma is proved.
\end{proof}

\medskip

\textit{Proof of Theorem~\ref{orcoho_h2}.} 
Take the~functions~$f$ and~$g$ on~$\mathbb{H}^2$ defined in~Lemma~\ref{lem_hn} 
and consider the~$1$-forms $\alpha=df$ and $\gamma=dg$ on $\mathbb{H}^{2}$. 
Obviously, $d\alpha=d\gamma=0$. By~Lemmas~\ref{lem_hn} and~\ref{lem_hn_orl}, 
$\alpha\in L^{\Phi}$ 
for any $N$-function~$\Phi$ such that $\int_0^1 \Phi(v)/ v^2 \,dv <\infty$
and $\gamma$ is smooth and $\gamma\in L^{\Psi_1}\cap L^{\Psi_2}$ if
$\int_0^1 \Psi_1(v)/ v^2 \,dv <\infty$ and $\int_0^1 \Psi_2(v)/ v^2 \,dv <\infty$.

Since $\int_{\mathbb{H}^2} \alpha\wedge\gamma\ne 0$, 
Proposition~\ref{wedge.compl1} shows that
$\alpha\not\in\overline{B}_{\Phi_1,\Phi_2}^{1}(\mathbb{H}^{2})$.

Now, using the isometry group of $\mathbb{H}^{2}$, we obtain an~infinite 
family of~linearly independent classes in
$\overline{H}_{\Phi_1,\Phi_2}^{1}(\mathbb{H}^{2})$. \qed

\section{The $L_\Phi$-Cohomology of~the~Ball}\label{sec.Bnor}

In~this section, we prove the~``$L^\Phi$-Poincar\'e lemma'', i.e., the~vanishing
of~the~$L^{\Phi}$-cohomology of~the~unit ball $\mathbb{B}^{n}\subset\mathbb{R}^{n}$.

Since $\mathbb{B}^{n}$ has finite volume,
$H_{\Phi_1,\Phi_2}^{0}(\mathbb{B}^{n})=
\overline{H}_{\Phi_1,\Phi_2}^{0}(\mathbb{B}^{n})=\mathbb{R}$
for any $N$-functions~$\Phi_1$ and $\Phi_2$.

For the case of~$L^p$ spaces, Gol$'$dshtein, Kuz$'$minov, and Shvedov 
proved the~vanishing of~the~$L^p$-cohomology of~the~ball 
in~\cite[Lemma 3.2]{GKSh82}; for $p\ne q$, Gol$'$dshtein and Troyanov
found necessary and sufficient conditions on~$p$ and $q$ for~the~vanishing
of~the~$L^{q,p}$-cohomology of~$\mathbb{B}^n$. Their proof is based on the~following
fact, established by~Iwaniec and Lutoborski in~\cite{IL93}:

\begin{prop}\label{iwan}
For any bounded convex domain $U\subset\mathbb{R}^{n}$ and any
$k=1,2,\dots,n$, there exists an operator
$$
T=T_{U}:L_{loc}^{1}(U,\Lambda^{k})\rightarrow
L_{loc}^{1}(U,\Lambda^{k-1})
$$
with the following properties:
\begin{enumerate}
\item[(a)] $T(d\theta)+dT\theta=\theta$ (in the sense of currents);
\item[(b)] $\ds
\left|T\theta(x)\right|\leq C\int_{U}\frac{|\theta(y)|}{\;\:|y-x|^{n-1}}dy$.
\end{enumerate}
\end{prop}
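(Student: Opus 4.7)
The plan is to construct $T$ by averaging the classical cone (Poincar\'e) homotopy operator over a smooth probability density supported in $U$, following the Cartan formula approach. Fix $\phi\in C_c^\infty(U)$ with $\phi\geq 0$ and $\int_U\phi\,dy=1$, supported in some closed ball $B\subset U$. For each $y\in U$, since $U$ is convex it is star-shaped with respect to $y$, so we have the standard cone homotopy
\[
K_y\theta(x)=\int_0^1 t^{k-1}\,\theta\bigl(y+t(x-y)\bigr)\,\lrcorner\,(x-y)\,dt,
\]
acting on $k$-forms, where $\lrcorner$ is interior multiplication with the vector $(x-y)$. Then set
\[
T\theta(x):=\int_U \phi(y)\,K_y\theta(x)\,dy.
\]

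For property~(a), first verify the pointwise Cartan identity $K_y(d\theta)+d(K_y\theta)=\theta$ in the sense of currents on $U$; this is routine and follows from differentiation under the integral sign together with the fundamental theorem of calculus, using that the segment $[y,x]$ lies in $U$ by convexity. Integrating against $\phi(y)\,dy$ and exchanging the order of integration with $d$ then gives $T(d\theta)+d(T\theta)=\theta$.

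For property~(b), substitute the definition and perform the change of variables $z=y+t(x-y)$ for fixed $x$. For $t\in(0,1]$ the map $y\mapsto z$ has Jacobian $(1-t)^n$ and satisfies $y=\frac{z-tx}{1-t}$, so after swapping the order of integration one obtains an expression of the schematic form
\[
T\theta(x)=\int_U \theta(z)\,\lrcorner\,K(x,z)\,dz,
\]
where the kernel $K(x,z)$ is controlled by $\int_0^1 t^{k-1}(1-t)^{-n}\,\phi\bigl(\tfrac{z-tx}{1-t}\bigr)\,|x-z|\,dt$ (the factor $|x-z|$ absorbing $|x-y|=|x-z|/(1-t)$ after simplification). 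The boundedness of $\phi$ and its support condition make the inner integral a standard one: a change of variable $s=(1-t)|x-z|$ (or a direct estimate splitting $t$ near $0$ and near $1$) shows $|K(x,z)|\leq C|x-z|^{-(n-1)}$, with $C$ depending only on $\|\phi\|_\infty$ and $\mathrm{diam}(U)$. This yields the pointwise estimate in~(b).

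The main obstacle is bookkeeping the change of variables and verifying that the singular kernel is integrable with the claimed $|x-y|^{-(n-1)}$ bound \emph{uniformly in $x\in U$}; one must check that the substitution does not degenerate when $t\to 1$ by exploiting that $\phi$ is compactly supported inside $U$, so the argument $(z-tx)/(1-t)$ is forced to stay in the support of $\phi$, effectively truncating the $t$-integration. All other steps (Cartan's homotopy on star-shaped domains, differentiation under the integral, Fubini) are routine given Proposition~\ref{iwan}'s smoothness of the cutoff $\phi$.
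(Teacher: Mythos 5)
The paper does not prove Proposition~\ref{iwan}: it is quoted verbatim from Iwaniec and Lutoborski~\cite{IL93} and immediately followed by \qed, so there is no in-paper argument to compare against. Your construction is exactly the one in~\cite{IL93}: average the Cartan cone homotopy $K_y$ over a smooth probability density $\phi\in C_c^\infty(U)$, obtain~(a) from the pointwise homotopy identity plus Fubini, and obtain~(b) by pushing the $y$-integral to a $z$-integral and exploiting that $\mathrm{supp}\,\phi$ truncates the $t$-integral near $t=1$. Both the strategy and the key observation (the constraint $\frac{z-tx}{1-t}\in\mathrm{supp}\,\phi$ forces $1-t\gtrsim|x-z|$, which kills the singularity) are correct.

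One small bookkeeping slip: after the substitution $y=\frac{z-tx}{1-t}$ you have $dy=(1-t)^{-n}\,dz$ \emph{and} $|x-y|=|x-z|/(1-t)$, so the $t$-integrand carries $(1-t)^{-(n+1)}$ rather than the $(1-t)^{-n}$ you wrote next to the extracted factor $|x-z|$. This does not affect the conclusion: with $s=1-t$, the support condition gives $s\geq |x-z|/M$ for a constant $M$ depending on $\mathrm{diam}(U)$ and $\mathrm{supp}\,\phi$, and $|x-z|\int_{|x-z|/M}^1 s^{-(n+1)}\,ds\leq \frac{M^n}{n}\,|x-z|^{-(n-1)}$, which is precisely the claimed bound. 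So the argument closes as you indicate; just correct the exponent when you write it out.
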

\qed

We prove

\begin{cor}\label{orl_iwan} 
If $\Phi$ is an~$N$-function then the~operator~$T$ maps $L^\Phi(U,\Lambda^{k})$ 
continuously into $L^\Phi(U,\Lambda^{k-1})$.
\end{cor}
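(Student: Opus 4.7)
The plan is to interpret Proposition~\ref{iwan}(b) as saying that $T$ is dominated by a Riesz-type integral operator with kernel $K(x,y):=|y-x|^{1-n}$, and then to pass from the ensuing pointwise bound to an $L^\Phi$ estimate by Jensen's inequality applied to the Orlicz modular $\rho_\Phi$. First I would record that, since $U$ is bounded with diameter $d$, both suprema
$$
M:=\sup_{x\in U}\int_U K(x,y)\,dy \quad \text{and}\quad \widetilde M:=\sup_{y\in U}\int_U K(x,y)\,dx
$$
are finite, because each is majorized by the integral of $|z|^{1-n}$ over a ball of radius $d$; up to enlarging the constant I may assume $M=\widetilde M$.

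The heart of the argument is Jensen's inequality for the probability measure $\nu_x(dy):=M_x^{-1}K(x,y)\,dy$ on $U$, where $M_x:=\int_U K(x,y)\,dy\le M$. Proposition~\ref{iwan}(b) yields
$$
\frac{|T\theta(x)|}{CM_x}\le\int_U|\theta(y)|\,\nu_x(dy),
$$
and since $\Phi$ is convex and nondecreasing on $[0,\infty)$ (being even, convex and vanishing at $0$),
$$
\Phi\!\left(\frac{|T\theta(x)|}{CM_x}\right)\le\int_U\Phi(|\theta(y)|)\,\nu_x(dy)=\frac{1}{M_x}\int_U K(x,y)\Phi(|\theta(y)|)\,dy.
$$
To strip out the $x$-dependence in the normalizing factor, I would use the elementary inequality $\Phi(\lambda t)\le\lambda\Phi(t)$ for $0\le\lambda\le1$ (a direct consequence of $\Phi(0)=0$ and convexity) with $\lambda=M_x/M\le1$, arriving at
$$
\Phi\!\left(\frac{|T\theta(x)|}{CM}\right)\le\frac{1}{M}\int_U K(x,y)\Phi(|\theta(y)|)\,dy.
$$

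Integrating in $x$, switching order of integration by Fubini (the integrand is nonnegative), and invoking the uniform bound $\widetilde M\le M$ then produces the modular inequality $\rho_\Phi(T\theta/(CM))\le\rho_\Phi(\theta)$. Translating to the gauge norm is routine: if $\|\theta\|_{(\Phi)}=\lambda$ then $\rho_\Phi(\theta/\lambda)\le1$, so applying the preceding estimate to $\theta/\lambda$ yields $\rho_\Phi(T\theta/(CM\lambda))\le1$, whence $\|T\theta\|_{(\Phi)}\le CM\|\theta\|_{(\Phi)}$. Because the gauge norm of a differential form coincides with the gauge norm of its modulus function (as noted in Section~\ref{orl_com}), this establishes continuity of $T$ from $L^\Phi(U,\Lambda^k)$ to $L^\Phi(U,\Lambda^{k-1})$. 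I do not foresee any real obstacle; the only delicate point is the convexity device that converts the pointwise Jensen inequality, with its $x$-dependent normalization $M_x$, into one with the uniform constant $M$, which is exactly what permits the subsequent application of Fubini to yield a clean modular bound.
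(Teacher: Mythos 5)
Your proof is correct, and it takes a genuinely different route from the paper's. The paper simply views $|T\theta|$ as dominated by a convolution $C\,|\theta|*g$ with $g(z)=|z|^{1-n}\chi_{\{|z|\le\operatorname{diam}U\}}\in L^1$, and then cites the Orlicz-space version of Young's convolution inequality from Rao--Ren ($\|f*g\|_\Phi\le\|f\|_\Phi\|g\|_1$), obtaining boundedness of $T$ in one line. You instead give a self-contained Schur-test argument at the level of the modular: Jensen's inequality for the convex function $\Phi$ against the normalized kernel measure $\nu_x$, then the elementary convexity inequality $\Phi(\lambda t)\le\lambda\Phi(t)$ for $\lambda\le1$ to trade the pointwise normalizer $M_x$ for the uniform constant $M$, and finally Fubini. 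This yields the clean modular bound $\rho_\Phi(T\theta/(CM))\le\rho_\Phi(\theta)$, which is then converted into a gauge-norm bound by the standard homogeneity argument. What your approach buys is that it does not presuppose the Orlicz convolution inequality (and thereby re-proves the relevant case of it); what the paper's approach buys is brevity and reuse of a textbook lemma. One small wrinkle you already handled implicitly: for the kernel $K(x,y)=|y-x|^{1-n}$ symmetry gives $M=\widetilde M$ automatically, so no constant-enlargement is actually needed; and the Jensen step should be understood for a.e.\ $x$, since $\int_U|\theta(y)|K(x,y)\,dy<\infty$ a.e.\ follows from $L^\Phi(U)\subset L^1(U)$ on the bounded domain $U$ together with Fubini and the finiteness of $\widetilde M$.
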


\begin{proof} 
The following Orlicz space version of~Young's inequality for~convolution holds 
(see the~proof of~Corollary~7 in~\cite[pp.~230--231]{RaRe91}\footnote{Though it 
is required in~Corollary~7 in~\cite[pp.~230--231]{RaRe91} that $\Phi\in\Delta_2$,
the~proof of~Young's inequality works for~general $N$-functions~$\Phi$.}: 
If $f\in L^\Phi$ and $g\in L^1$ then
$f*g\in L^\Phi$ and 
$$
\|f*g\|_{\Phi} \le \|f\|_\Phi \|g\|_1.
$$
Applying this inequality to $f=|\theta|$ and $g(x)=|x|^{1-n}$, we obtain the~corollary
from~Proposition~\ref{iwan}. In~the~Orlicz norms, the~norm of~the~operator~$T$ is bounded
by~$\|g\|_1$.
\end{proof}

\begin{cor}\label{cohom_orl_iwan}
The operator $T:\Omega_\Phi(U,\Lambda^{k}) \to \Omega_\Phi(U,\Lambda^{k-1})$
is bounded and $Td\omega+dT\omega=\omega$ for any $\omega\in\Omega_\Phi^{k}(U)$.
\end{cor}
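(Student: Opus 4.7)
The plan is to read off the corollary as a direct combination of Proposition~\ref{iwan} with Corollary~\ref{orl_iwan}, using the homotopy identity to control the exterior differential of $T\omega$.

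First I would fix $\omega\in\Omega_\Phi^k(U)$, so by definition both $\omega\in L^\Phi(U,\Lambda^k)$ and $d\omega\in L^\Phi(U,\Lambda^{k+1})$. By Corollary~\ref{orl_iwan}, applied to $\theta=\omega$, we immediately get $T\omega\in L^\Phi(U,\Lambda^{k-1})$ with
$\|T\omega\|_{(\Phi)}\le C\|\omega\|_{(\Phi)}$,
and similarly $T(d\omega)\in L^\Phi(U,\Lambda^k)$ with $\|T(d\omega)\|_{(\Phi)}\le C\|d\omega\|_{(\Phi)}$, where $C$ is the operator norm bound $\||\cdot|^{1-n}\|_1$ coming from the Young convolution inequality used in the proof of Corollary~\ref{orl_iwan}.

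Next I would invoke the homotopy identity (a) of Proposition~\ref{iwan} in the sense of currents,
$$
T(d\omega)+dT\omega=\omega,
$$
which can be rewritten as $dT\omega=\omega-T(d\omega)$. Since the right-hand side lies in $L^\Phi(U,\Lambda^k)$ by the previous step, the weak differential $dT\omega$ is in $L^\Phi(U,\Lambda^k)$ as well. This shows $T\omega\in\Omega_\Phi^{k-1}(U)$ and recovers the announced identity $Td\omega+dT\omega=\omega$ for $\omega\in\Omega_\Phi^k(U)$.

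Finally, I would combine the two norm estimates to bound the graph norm:
$$
\|T\omega\|_{(\Phi),(\Phi)}=\|T\omega\|_{(\Phi)}+\|dT\omega\|_{(\Phi)}\le C\|\omega\|_{(\Phi)}+\|\omega\|_{(\Phi)}+C\|d\omega\|_{(\Phi)}\le(1+C)\|\omega\|_{(\Phi),(\Phi)},
$$
which proves boundedness of $T:\Omega_\Phi^k(U)\to\Omega_\Phi^{k-1}(U)$. There is no real obstacle here since all the heavy lifting has been done: the pointwise convolution bound of Proposition~\ref{iwan}(b) together with Young's inequality in Orlicz spaces (cited in the proof of Corollary~\ref{orl_iwan}) gives the $L^\Phi$-continuity, and the current-level identity of Proposition~\ref{iwan}(a) propagates automatically to the $L^\Phi$ setting since both sides are locally integrable. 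The only point worth being explicit about is that the identity really does hold as an equality of weak forms in $L^\Phi$, not merely as distributions, which follows from $\omega-T(d\omega)\in L^\Phi\subset L^1_{\mathrm{loc}}$.
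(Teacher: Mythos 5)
Your argument is correct and is exactly the reasoning the paper leaves implicit: Corollary~\ref{orl_iwan} gives $T\omega, T(d\omega)\in L^\Phi$ with operator-norm bounds, the homotopy identity of Proposition~\ref{iwan}(a) (valid as currents, and upgraded to weak forms since $\omega-T(d\omega)\in L^\Phi\subset L^1_{\loc}$) identifies $dT\omega=\omega-T(d\omega)\in L^\Phi$, and the two estimates combine to bound the graph norm. No gaps; your explicit constant $(1+C)$ and the remark about the current-level identity descending to $L^\Phi$ are precisely the right points to make.
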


Corollary~\ref{cohom_orl_iwan} gives the~following 

\begin{thm}\label{orl_poinc}
If $\Phi$ is an~$N$-function then $H_\Phi^{k}(\mathbb{B}^{n})=0$ 
for all $k=1,...,n$.
\end{thm}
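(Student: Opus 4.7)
The plan is that this theorem is essentially an immediate application of the machinery already set up in Corollaries \ref{orl_iwan} and \ref{cohom_orl_iwan}. Since $\mathbb{B}^n$ is a bounded convex domain, Proposition~\ref{iwan} furnishes the Iwaniec--Lutoborski homotopy operator $T = T_{\mathbb{B}^n}$, and Corollary~\ref{orl_iwan} upgrades it to a bounded operator on the Orlicz space $L^\Phi$. Consequently the homotopy identity $T\,d\omega + d\,T\omega = \omega$ holds on $\Omega_\Phi^k(\mathbb{B}^n)$.

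I would take an arbitrary closed form $\omega \in Z_\Phi^k(\mathbb{B}^n)$ with $1 \le k \le n$. By Corollary~\ref{orl_iwan}, the form $\eta := T\omega$ lies in $L^\Phi(\mathbb{B}^n,\Lambda^{k-1})$. Applying the homotopy identity and using $d\omega = 0$ yields $d\eta = \omega \in L^\Phi(\mathbb{B}^n,\Lambda^k)$, so in particular $\eta \in \Omega_\Phi^{k-1}(\mathbb{B}^n)$ and $\omega \in B_\Phi^k(\mathbb{B}^n)$. Hence $Z_\Phi^k(\mathbb{B}^n) \subset B_\Phi^k(\mathbb{B}^n)$, giving $H_\Phi^k(\mathbb{B}^n) = 0$.

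The genuinely nontrivial point — boundedness of $T$ on $L^\Phi$ — has already been addressed, since the pointwise estimate $|T\theta(x)| \le C\int_U |\theta(y)|\,|y-x|^{1-n}\,dy$ is a convolution against the locally integrable kernel $g(x) = |x|^{1-n}$, and Young's inequality for convolution in Orlicz spaces (valid for general $N$-functions $\Phi$, as noted in the footnote to Corollary~\ref{orl_iwan}) controls $\|T\theta\|_\Phi$ by $\|g\|_1 \|\theta\|_\Phi$. Thus no further work is needed at the level of the theorem itself: the proof reduces to invoking Corollary~\ref{cohom_orl_iwan} and reading off that $T\omega$ is the required primitive of $\omega$.
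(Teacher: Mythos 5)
Your proof is correct and follows essentially the same route as the paper's: take a closed form $\omega \in Z_\Phi^k(\mathbb{B}^n)$, apply the Iwaniec--Lutoborski operator $T$ (which Corollaries~\ref{orl_iwan} and~\ref{cohom_orl_iwan} show is bounded on $L^\Phi$), and use the homotopy identity $dT\omega + Td\omega = \omega$ together with $d\omega = 0$ to exhibit $T\omega$ as a primitive in $L^\Phi$. No gap; this matches the paper's argument.
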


\begin{proof}
Let $\omega\in Z_\Phi^{k}(\mathbb{B}^{n})$. By Corollary~\ref{cohom_orl_iwan},  
$T\omega\in L^\Phi(\mathbb{B}^{n}, \Lambda^{k+1})$. Since 
$\omega=dT\omega+Td\omega=d(T\omega)$, we conclude 
 that $[\omega]=[dT\omega]=0\in H_\Phi^k(\mathbb{B}^{n})$ 
and so $H_\Phi^k(\mathbb{B}^{n})=0$.
\end{proof}

\end{document}